\documentclass[11pt,reqno]{amsart}
\usepackage[margin=1in]{geometry}
\usepackage[utf8]{inputenc}
\usepackage[T1]{fontenc}
\usepackage{amssymb}
\usepackage{graphicx} 
\usepackage{mathrsfs}
\usepackage{enumerate}
\usepackage{xspace}
\usepackage{color}
\usepackage{enumerate}
\usepackage{enumitem}
\usepackage{amsthm}
\usepackage{dsfont}
\usepackage{bbm}
\usepackage{commath}
\usepackage[colorlinks=true,linkcolor=blue]{hyperref}
\usepackage[numbers]{natbib}
\usepackage[backgroundcolor=white,bordercolor=red]{todonotes}
\DeclareMathAlphabet{\mathpzc}{OT1}{pzc}{m}{it}
\usepackage[nameinlink]{cleveref}
\numberwithin{equation}{section}
\begin{document}

\expandafter\let\expandafter\oldproof\csname\string\proof\endcsname
\let\oldendproof\endproof
\renewenvironment{proof}[1][\proofname]{%
	\oldproof[\scshape\hspace{1em}#1]%
}{\oldendproof}

\newtheoremstyle{mystyle_thm}% ⟨name ⟩ 
{6pt}% ⟨Space above ⟩
{6pt}% ⟨Space below ⟩
{\itshape}% ⟨Body font ⟩
{1em}% ⟨Indent amount ⟩2
{\scshape}% ⟨Theorem head font⟩
{.}% ⟨Punctuation after theorem head ⟩
{.5em}% ⟨Space after theorem head ⟩3
{}%

\newtheoremstyle{mystyle_def}% ⟨name ⟩ 
{6pt}% ⟨Space above ⟩
{6pt}% ⟨Space below ⟩
{}% ⟨Body font ⟩
{1em}% ⟨Indent amount ⟩2
{\scshape}% ⟨Theorem head font⟩
{.}% ⟨Punctuation after theorem head ⟩
{.5em}% ⟨Space after theorem head ⟩3
{}%

\theoremstyle{mystyle_thm}

\newtheorem{theorem}{Theorem}[section]
\newtheorem{lemma}[theorem]{Lemma}
\newtheorem{proposition}[theorem]{Proposition}
\newtheorem{corollary}[theorem]{Corollary}
\newtheorem{definition}[theorem]{Definition}
\newtheorem{Ass}[theorem]{Assumption}
\newtheorem{condition}[theorem]{Condition}
\newtheorem*{lemma*}{Lemma}

\theoremstyle{mystyle_def}

\newtheorem{example}[theorem]{Example}
\newtheorem{remark}[theorem]{Remark}
\newtheorem{SA}[theorem]{Standing Assumption}
\newtheorem{discussion}[theorem]{Discussion}
\newtheorem{remarks}[theorem]{Remark}
\newtheorem*{notation}{Remark on Notation}
\newtheorem{application}[theorem]{Application}

%Stochastic Intervals
\newcommand{\of}{[\hspace{-0.06cm}[}
\newcommand{\gs}{]\hspace{-0.06cm}]}

%Lebesgue Measure
\newcommand{\llambda}{{\mathchoice
		{\lambda\mkern-4.5mu{\raisebox{.4ex}{\scriptsize$\setminus$}}}
		{\lambda\mkern-4.83mu{\raisebox{.4ex}{\scriptsize$\setminus$}}}
		{\lambda\mkern-4.5mu{\raisebox{.2ex}{\footnotesize$\scriptscriptstyle\setminus$}}}
		{\lambda\mkern-5.0mu{\raisebox{.2ex}{\tiny$\scriptscriptstyle\setminus$}}}}}

%Indikator
\newcommand{\1}{\mathds{1}}

%Filtrations
\newcommand{\F}{\mathbf{F}}
\newcommand{\G}{\mathbb{G}}

\newcommand{\B}{\mathbf{B}}

%Martingale Measures
\newcommand{\M}{\mathcal{M}}

%Scalar Product
\newcommand{\la}{\langle}
\newcommand{\ra}{\rangle}

%Special Type of Quadratic Variation
\newcommand{\lle}{\langle\hspace{-0.085cm}\langle}
\newcommand{\rre}{\rangle\hspace{-0.085cm}\rangle}
\newcommand{\blle}{\Big\langle\hspace{-0.155cm}\Big\langle}
\newcommand{\brre}{\Big\rangle\hspace{-0.155cm}\Big\rangle}

%Coordinate Process
\newcommand{\X}{\mathsf{X}}

\newcommand{\lv}{I^-}
\newcommand{\uv}{I^+}

%Short Cuts
\newcommand{\tr}{\operatorname{tr}}
\newcommand{\N}{{\mathbb{N}}}
\newcommand{\cadlag}{c\`adl\`ag }
\newcommand{\on}{\operatorname}
\newcommand{\oP}{\overline{P}}
\newcommand{\oO}{\mathcal{O}}
\newcommand{\D}{\mathsf{D}} %{D(\mathbb{R}_+; \mathbb{R})}
\newcommand{\bx}{\mathsf{x}}
\newcommand{\bb}{\hat{b}}
\newcommand{\bs}{\hat{\sigma}}
\newcommand{\bv}{\hat{v}}
\renewcommand{\v}{\mathfrak{m}}
\newcommand{\ob}{\bar{b}}
\newcommand{\oa}{\bar{a}}
\newcommand{\os}{\widehat{\sigma}}
\renewcommand{\j}{\varkappa}
\newcommand{\scl}{\ell}
\newcommand{\Y}{\mathscr{Y}}
\newcommand{\Z}{\mathscr{Z}}
\newcommand{\T}{\mathcal{T}}
\newcommand{\con}{\mathsf{c}}
\newcommand{\nk}{\hspace{-0.25cm}{{\phantom A}_k^n}}
\newcommand{\nl}{\hspace{-0.25cm}{{\phantom A}_1^n}}
\newcommand{\nm}{\hspace{-0.25cm}{{\phantom A}_2^n}}
\newcommand{\n}{\hspace{-0.35cm}{\phantom {Y_s}}^n}
\newcommand{\nme}{\hspace{-0.35cm}{\phantom {Y_s}}^{n - 1}}
\renewcommand{\o}{\hspace{-0.35cm}\phantom {Y_s}^0}
\newcommand{\e}{\hspace{-0.4cm}\phantom {U_s}^1}
\newcommand{\z}{\hspace{-0.4cm}\phantom {U_s}^2}
\newcommand{\iii}{|\hspace{-0.05cm}|\hspace{-0.05cm}|}
\newcommand{\co}{\overline{\on{co}}}
\newcommand{\ovb}{\overline{b}}
\newcommand{\ova}{\overline{a}}
\newcommand{\s}{\mathfrak{s}}
\newcommand{\opsi}{\overline{\Psi}}
\newcommand{\ol}{\mathcal{L}}
\newcommand{\cW}{\mathscr{W}}
\newcommand{\cU}{\mathcal{U}}
\newcommand{\oD}{\overline{D}}
\newcommand{\ua}{\underline{a}}
\newcommand{\ou}{\overline{b}}
\newcommand{\uu}{\underline{b}}
\renewcommand{\d}{\text{\rm d}}

%Typographical
\renewcommand{\epsilon}{\varepsilon}
%\renewcommand{\rho}{\varrho}

%Semimartingale laws
\newcommand{\fPs}{\fP_{\textup{sem}}}
\newcommand{\fPas}{\mathfrak{S}_{\textup{ac}}}
\newcommand{\rrarrow}{\twoheadrightarrow}
\newcommand{\cA}{\mathcal{A}}
\newcommand{\ocA}{\mathcal{U}}
\newcommand{\cR}{\mathcal{K}}
\newcommand{\cK}{\mathcal{K}}
\newcommand{\cQ}{\mathcal{Q}}
\newcommand{\cF}{\mathcal{F}}
\newcommand{\cE}{\mathcal{E}}
\newcommand{\cC}{\mathcal{C}}
\newcommand{\cD}{\mathcal{D}}
\newcommand{\bC}{\mathbb{C}}
\newcommand{\cH}{\mathcal{H}}
\newcommand{\bth}{\overset{\leftarrow}\theta}
\renewcommand{\th}{\theta}
\newcommand{\cG}{\mathcal{G}}
\newcommand{\fPasn}{\mathfrak{S}^{\textup{ac}, n}_{\textup{sem}}}
\newcommand{\CLM}{\mathfrak{M}^\textup{ac}_\textup{loc}}
\newcommand{\Sd}{\mathcal{S}^\textup{sp}_{\textup{d}}}
\newcommand{\Sc}{\mathcal{S}}
\newcommand{\Sac}{\mathcal{S}_\textup{ac}}
\newcommand{\A}{\mathbb{A}}
\newcommand{\Td}{\mathsf{T}^\textup{d}}
\renewcommand{\t}{\mathfrak{t}}
%\DeclareMathOperator*{\gr}{gr}

%Basic Stuff
\newcommand{\bR}{\mathbb{R}}
\newcommand{\nnabla}{\nabla}
\newcommand{\f}{\mathfrak{f}}
\newcommand{\g}{\mathfrak{g}}
\newcommand{\oconv}{\overline{\on{co}}\hspace{0.075cm}}
\renewcommand{\a}{\mathfrak{a}}
\renewcommand{\b}{\mathfrak{b}}
\renewcommand{\d}{\mathsf{d}}
\newcommand{\bS}{\mathbb{S}^d_+}
\newcommand{\p}{\mathsf{p}}
\newcommand{\dr}{r} %{\mathsf{r}}
\newcommand{\m}{\mathbb{M}}
\newcommand{\Q}{Q}
\newcommand{\usc}{\textit{USC}}
\newcommand{\lsc}{\textit{LSC}}
\newcommand{\q}{\mathfrak{q}}
\renewcommand{\X}{\mathbb{X}}
\newcommand{\W}{\mathscr{W}}
\newcommand{\fP}{\mathcal{P}}
\newcommand{\w}{\mathsf{w}}
\newcommand{\oM}{\mathsf{M}}
\newcommand{\oZ}{\mathsf{Z}}
\newcommand{\oK}{\mathsf{K}}
\renewcommand{\Re}{\operatorname{Re}}
\newcommand{\cCk}{\mathsf{c}_k}
\newcommand{\C}{\mathsf{C}}
\newcommand{\cP}{\mathcal{P}}
\newcommand{\oPi}{\overline{\Pi}}
\newcommand{\cI}{\mathcal{I}}
\renewcommand{\P}{\mathbb{P}}
\newcommand{\E}{\mathbb{E}}

\newcommand{\UC}{\textit{UC}}
\renewcommand{\N}{\mathcal{C}}
\renewcommand{\M}{\mathcal{H}}
\renewcommand{\G}{\mathbb{S}}
\renewcommand{\L}{{\mathcal{L}_\eta^2}}

\renewcommand{\emptyset}{\varnothing}

\allowdisplaybreaks

\makeatletter
%\@namedef{subjclassname@2020}{%
%	\textup{2020} Mathematics Subject Classification} Primary 46N10; 60G65; 93E20 Secondary 47D07; 49J53
%\makeatother

 \title[Small-Noise Analysis of Controlled Functional Differential Equations]{A Small-Noise Analysis of Controlled Functional Differential Equations with Gaussian Noise} 
 %\\
\author[D. Criens]{David Criens}
\address{University of Freiburg, Ernst-Zermelo-Str. 1, 79104 Freiburg, Germany.}
\email{david.criens@stochastik.uni-freiburg.de}

\author[M. Nendel]{Max Nendel}
\address{University of Waterloo, 200 University Ave W, N2L 3G1, Waterloo, Ontario, Canada.}
\email{mnendel@uwaterloo.ca}

\thanks{Financial support through the
Natural Sciences and Engineering Research Council of Canada via Discovery Grant no.\ RGPIN-2025-04219 is gratefully acknowledged.}
\date{\today}

\begin{abstract}
We study small-noise asymptotics for controlled functional differential equations driven by additive Gaussian noise.\ The Gaussian noise is modeled on an abstract Wiener space, covering both classical Brownian perturbations and non-Markovian perturbations such as fractional Brownian motion.\ The drift coefficient is assumed to be non-anticipative, Lipschitz continuous in the state path, and of linear growth.\ For bounded uniformly continuous cost functionals, we prove game-theoretic lower and upper bounds for the small-noise logarithmic value functions and identify their limit whenever the associated deterministic zero-sum game has a value.\ In the limiting game, one player chooses the drift control, while the other selects a Cameron--Martin shift of the Gaussian noise, penalized by the corresponding quadratic energy cost. We further provide sufficient Fan-type convexity and concavity conditions under which the game has a value, thereby obtaining a full small-noise Laplace principle.\ The proof combines the Boué--Dupuis variational representation on abstract Wiener spaces with pathwise stability of the controlled solution map and adapted finite-dimensional approximations of Cameron--Martin shifts.

\smallskip\noindent
 \emph{Key words:} small-noise asymptotics; risk-sensitive control; functional differential equation; abstract Wiener space; Gaussian process; fractional Brownian motion; Boué--Dupuis formula; deterministic differential game; Cameron--Martin space
   
 		\smallskip
		\noindent \emph{AMS 2020 Subject Classification:} Primary 93E03; 34K35; 34K50; 49N70; 60G15; Secondary 60H10; 60F10. 
\end{abstract}

\maketitle

\section{Introduction}

Small random perturbations of controlled dynamical systems lead naturally to risk-sensitive control problems. In the classical Brownian setting, consider
\[
d Y^{\varepsilon, \lambda}_t = b (Y^{\varepsilon, \lambda}_t, \lambda_t) \, dt + \sqrt{\varepsilon} \, d W_t, \quad \text{where }W \text{ is a Brownian motion}.
\]  
A central question is to identify 
\begin{align} \label{eq: intro motivating limit}
\varepsilon \inf_{\lambda} \log \mathbb{E} \Big[ e^{\varphi (Y^{\varepsilon, \lambda}_T) / \varepsilon} \Big] \quad \text{as } \varepsilon \downarrow 0.
\end{align} 
Such limits are closely related to the Freidlin–Wentzell theory \cite{FreidlinWentzell2012} and are typically represented by deterministic zero-sum games in which an additional player selects an adverse perturbation and pays a quadratic energy cost, see \cite{BD_01, FleSon_06, J_92}. 

The present paper develops a version of this principle for controlled functional differential equations with additive Gaussian noise, where the coefficients may depend on the past history of the state and the noise may have long memory.
The stochastic system is formally written as
\begin{equation}\label{eq:intro-stochastic-system}
    dY_t^{\varepsilon, \lambda}
    =\mu(t,Y^{\varepsilon, \lambda},\lambda_t)\,dt+
      \sqrt{\varepsilon}\,d\X_t,
    \qquad
    Y_0^\varepsilon=x_0,
\end{equation}
where $\lambda$ is a progressively measurable control, \(\mu\) is a Lipschitz continuous non-anticipative coefficient, and $\X$ is the coordinate process under a centered Gaussian measure on a path space, introduced through an abstract Wiener space $(\Omega,\M,\mathbb P)$, where $\M$ denotes the corresponding Cameron--Martin space.  
This framework contains standard Brownian motion, but also fractional Brownian motion and other Gaussian processes with memory.  In particular, when $\X$ is a fractional Brownian motion with Hurst parameter $H\in(0,1)$, the relevant Cameron--Martin space is the image of $L^2$ under the fractional Volterra kernel, see \cite{Budhiraja2025, DU_99,Mishura2008,Nualart2006}. 

The main object of interest is the logarithmic value
\begin{equation}\label{eq:intro-value}
    V^\varepsilon
    :=
    \varepsilon\inf_{a\in\A}
    \log \mathbb E\bigg[
       \exp\bigg(
          \frac{\varphi\big(\G(a,\sqrt{\varepsilon}\X)\big)}{\varepsilon}
       \bigg)
    \bigg],
\end{equation}
where $\G$ is the pathwise solution map associated with the deterministic controlled equation, i.e., 
\begin{equation}\label{eq:intro-solution-map}
    \G_t(v,w)
    =x_0+
      \int_0^t \mu(s,\G(v,w),v_s)\,ds + w_t
    \quad \text{for } t\in[0,1].
\end{equation}
Here, $a$ ranges over adapted control strategies and $\varphi$ is a bounded uniformly continuous cost on the path space.\ Our main result identifies the \(\varepsilon \downarrow 0\) limit of \eqref{eq:intro-value} as the value of the deterministic zero-sum game with cost function
\begin{equation}\label{eq:intro-game-cost}
    C(v,u)
    :=\varphi\big(\G(v,u)\big)-\frac12\|u\|_{\M}^2
    \quad \text{for }v\in\N\text{ and } u\in\M, 
\end{equation}
where \(\N\) is a space of control strategies.\
More precisely, if $\lv$ and $\uv$ denote the lower and upper values of the corresponding non-anticipative deterministic game, we show that 
\begin{equation}\label{eq:intro-main-result}
    \lv \leq \liminf_{\varepsilon \downarrow 0} V^\varepsilon \leq \limsup_{\varepsilon \downarrow 0} V^\varepsilon \leq \uv.
\end{equation}
In particular, if the game is fair in the sense that \(\lv = \uv\), then 
\[
\lim_{\varepsilon\downarrow0}V^\varepsilon = \lv = \uv.
\]
This result may be viewed as a path-dependent, non-Markovian analogue of classical small-noise risk-sensitivity results for control problems, where in the limiting game an artificial player is constrained by the geometry of the Cameron--Martin space of the driving Gaussian process.

We also derive a concrete sufficient condition for the fairness of the game based on Fan’s minimax theorem. Namely, we show that the game is fair if the cost \(C (v, u)\) is convex in \(v\) and concave in \(u\), referring to Fan's extended definitions of convexity and concavity.\ Moreover, we discuss viscosity methods that imply the fairness of the game.

A key feature of our approach is that it avoids stochastic calculus for the driving noise.  Since the noise enters additively, the equation is interpreted through the deterministic solution map \eqref{eq:intro-solution-map}; pathwise existence, uniqueness, and stability follow from Picard iteration and Gronwall estimates.  This is particularly useful for fractional Brownian motion, which is not a semimartingale except for the standard Brownian case. Thus, stochastic integration with respect to it usually requires Young, rough-path, Skorokhod, or Malliavin techniques depending on the value of the Hurst parameter and on the coefficient structure.\ In the present additive setting, none of these constructions is needed.\ The memory of the noise enters only through the Cameron--Martin norm and the adaptedness structure.

The proof has three main ingredients.  First, we use the Bou\'e--Dupuis variational representation for exponential functionals of Gaussian processes.\ The original formula was established for Brownian motion in \cite{BD_98}, extensions and systematic developments of the weak-convergence approach to large deviations were developed in \cite{BudhirajaDupuis2000,BudhirajaDupuis2019,dupuisellis}, while the abstract Wiener space version used here is due to Zhang \cite{Z_09}.\  Second, we exploit the continuity properties of the deterministic solution map $\G$, which transfer small perturbations of the driving path into small perturbations of the controlled state uniformly over controls.\  Third, we construct adapted finite-dimensional approximations of Cameron--Martin shifts.\ In the Volterra case this construction is particularly transparent: one approximates an $L^2$ input by block averages and shifts the result one time step forward, thereby preserving causality.\ In the general abstract Wiener setting, the same mechanism can be formulated through a continuous resolution of the identity and the spectral multiplicity theorem for projection-valued measures.

The use of adapted approximations is one of the technical points that distinguishes the present argument from a direct application of Schilder's theorem.  A compact embedding $\M\hookrightarrow\Omega$ gives finite-rank approximation in principle, but adaptedness requires the approximants to read only the past and write into the future.  This is why the proof uses delayed finite-rank contractions rather than ordinary orthogonal projections.  The construction is closely related in spirit to adapted Wong--Zakai approximations and to the approximation procedures that appear in support theorems and weak-convergence proofs of large deviations.

The paper also provides explicit examples.\ For a controlled fractional-kernel model with terminal cost, the limiting game can be reduced to a two-dimensional static optimization involving the accumulated control and the terminal Cameron--Martin displacement. The value depends explicitly on the Hurst parameter~\(H\) through the variance $T^{2H}$ at the observation time $T$.  This example illustrates that the limit detects the memory structure of the perturbation even when the controlled dynamics themselves are elementary.  From a modeling perspective, this is relevant for systems driven by colored or persistent noise, such as viscoelastic materials, anomalous transport, hydrological and network traffic models, or controlled systems subject to slowly decaying environmental correlations.\ In these settings, the cost of producing a rare displacement is not determined by a Brownian energy but by the Cameron--Martin geometry of the underlying correlated Gaussian field.

We now place the contribution in the surrounding literature.\ The foundational results on small-noise asymptotics for stochastic differential equations go back to Freidlin and Wentzell \cite{FreidlinWentzell2012}, see \cite{DZ_98,DeuschelStroock1989, dupuisellis, Varadhan1984} for general large deviations methods and \cite{zbMATH06864073,zbMATH07370698} for recent developments in general settings.\ A key tool in the theory is the variational representation of the logarithmic transform, which are precisely the dual representations of entropic risk measures, cf.\ \cite{FS2016}.\ Its applications to large deviations are a corner stone of the weak-convergence approach developed in \cite{BudhirajaDupuis2000,BudhirajaDupuis2019, dupuisellis}.\ Variational representations of Bou\'e--Dupuis-type have proved especially effective because they convert exponential asymptotics into stochastic control problems, see \cite{BVLT_20, BD_98,Budhiraja2024, CK_25, Z_09}.\ 
Large deviations and Laplace principles for fractional Brownian motion and its functionals rely on the corresponding Cameron--Martin structure and have been studied from several viewpoints, see, among others, \cite{Budhiraja2025,DU_99,Nualart2006,Mishura2008}.\ Functional and delay equations driven by fractional or Volterra noise have also received sustained attention, especially in connection with pathwise integration, rough paths, and memory effects, see \cite{FrizVictoir2010, NualartRascanu2002}.\ Recent developments related to large deviations can, for example, be found in \cite{Budhiraja2025, JP_22, MRTZ_16, NR_00}. 
The present paper differs from these works by combining path-dependent controlled dynamics, additive abstract Gaussian perturbations, and a game-theoretic Laplace limit in a single framework.

The formulation is intended to be flexible enough for both stochastic control and applied probability.\ For readers interested in stochastic control, the theorem identifies the small-noise limit of an exponential risk-sensitive criterion as a deterministic non-anticipative game.  For readers interested in large deviations, it gives a Laplace principle in which the contraction map itself depends on a minimizing control and where the limiting variational problem retains the temporal information encoded in the Cameron--Martin space.  For readers interested in fractional or colored noise models, the result gives a way to handle memory without first developing an It\^o theory for the noise.

The remainder of the paper is organized as follows.\  Section~\ref{sec:main-results} introduces the abstract Wiener framework, the controlled functional differential equation, and the deterministic game.\ It then states the main theorem, Theorem~\ref{theo: main zusammengesetzt}, and discusses examples, including a fractional-kernel game whose value depends explicitly on the Hurst parameter. Section~\ref{sec:proof} presents the proof of Theorem~\ref{theo: main zusammengesetzt}.\ We first develop the required functional-analytic approximation tools, cf.\ Section~\ref{sec:funkana}, then establish the Bou\'e--Dupuis representation in Section \ref{sec:boue.dupuis}, and finally prove the lower and upper bounds, see Section \ref{sec:first.ineq} and Section \ref{sec:last.ineq}, respectively.

\section{The Setup and Main Result}\label{sec:main-results}

For a fixed dimension \(d \in \mathbb{N}\), let \(\Omega := C_0 ([0, 1]; \bR^d)\) be the space of continuous functions from \([0, 1]\) to \(\bR^d\) that start in the origin, endowed with the uniform topology induced by the norm \(\| \omega \|_\infty := \| \omega \|_1\), where \(\| \omega \|_t := \sup_{s \in [0, t]} \| \omega (s) \|\)  for $t\in [0,1]$.\ Let \(\cF := \mathcal{B} ( \Omega )\) be the corresponding Borel \(\sigma\)-field.\ The coordinate process is denoted by \(\X_t (\omega) = \omega (t)\) for \(\omega \in \Omega\) and \(t \in [0, 1]\).\ Moreover, let \(\Lambda\) be a nonempty Polish space, which serves as an action space for the control problem under consideration.\ The coefficient is given by the following standing assumption. 

\begin{SA} \label{SA: main coefficient}
	Let \(\mu \colon [0, 1] \times C ([0, 1]; \bR^d) \times \Lambda \to \bR^d\) be a Borel map\footnote{Throughout, \(C ([0, 1]; \bR^d)\) is the space of continuous functions from \([0, 1]\) to \(\bR^d\) endowed with the uniform topology.} with the following properties:
	\begin{enumerate}
		\item[(A1)] \(\mu\) is non-anticipative in the sense that, for all $t \in [0, 1]$, $\lambda \in \Lambda$, and $\alpha, \alpha' \in C ([0, 1]; \bR^d)$ with $\alpha = \alpha'$ on $[0, t]$, also $\mu (t, \alpha, \lambda) = \mu (t, \alpha', \lambda)$.
		\item[(A2)] \(\mu\) satisfies the following Lipschitz and linear growth assumptions: there is a constant \(L > 0\) such that 
		\begin{align*}
			\| \mu (t, \alpha, \lambda) - \mu (t, \alpha', \lambda) \| \leq L \, \| \alpha - \alpha' \|_t\quad \text{and}\quad
			\| \mu (t, \alpha, \lambda) \| \leq L \, \big( 1 + \| \alpha \|_t \big)
		\end{align*}
		for all $t \in [0, 1]$, $\lambda \in \Lambda$, and $\alpha, \alpha' \in C ([0, 1]; \bR^d)$.
        \item[(A3)] The map $\lambda \mapsto \mu (t, \alpha, \lambda)$ is uniformly equi-continuous, i.e., there exists a modulus of continuity $m_\mu$ such that, for all $t \in [0, 1]$, $\lambda, \lambda' \in \Lambda$, and $\alpha \in C ([0, 1]; \bR^d)$,
        \[
        \| \mu (t, \alpha, \lambda) - \mu (t, \alpha, \lambda') \| \leq m_\mu \big( d_\Lambda (\lambda, \lambda') \big), 
        \]
        where $d_\Lambda$ denotes a complete metric on $\Lambda$ that induces its topology.
	\end{enumerate}
\end{SA}

We are interested in the influence of random Gaussian perturbations to the deterministic controlled system 
\begin{align} \label{eq: deterministic controlled equation}
d Y_t = \mu (t, Y, \lambda_t) \, dt, \quad Y_0 = x_0, 
\end{align} 
where $x_0 \in \bR^d$ is an initial value that remains fixed throughout the paper.\ In the following, we introduce a general framework for Gaussian noise that includes classical white noise as well as Gaussian perturbations with memory, coming, for example, from fractional Brownian motion (fBM). 

	Let \(\P\) be a non-degenerate centered Gaussian measure on \((\Omega, \cF)\), i.e., for every non-zero \(\gamma\) in the topological dual $\Omega^*$ of the Banach space $(\Omega, \| \cdot \|_\infty)$, the random variable $\Omega \ni \omega \mapsto {_\Omega}\langle \omega, \gamma \rangle_{\Omega^*}$ is a centered Gaussian random variable with non-zero variance.\ The expectation operator corresponding to the probability measure \(\P\) is denoted by \(\E\).
	
	It is well-known (see, e.g., \cite[Theorem~8.2.3]{stroock_APV_V3}) that there exists a unique separable Hilbert space $(\M, \| \cdot \|_{\M}, \langle \cdot, \cdot \rangle_{\M})$ that is densely and continuously embedded as a subspace of $(\Omega, \| \cdot \|_\infty)$ such that 
	\[
	\E \Big[ e^{ i\, {_\Omega}\langle \, \cdot \, , \, \gamma \rangle_{\Omega^*}} \Big] = e^{- \frac{1}{2} \| h_\gamma \|_{\M}^2 } \quad \text{for all }\gamma \in \Omega^*, 
	\] 
	where $i$ is the imaginary unit and $h_{\gamma}$ is the unique element of $\M$ such that $\langle h, h_{\gamma} \rangle_{\M} = {_\Omega} \langle h, \gamma\rangle_{\Omega^*}$ for all $h\in \M$, see \cite[Lemma~8.2.2]{stroock_APV_V3}.\ The space $\M$ is usually called the Cameron--Martin space associated to $\P$ and the triple $(\Omega,\M,\P)$ is called an abstract Wiener space. 

\begin{example} \label{ex: wiener space}\
\begin{enumerate}
	\item[\textup{(i)}]
	Consider the classical case where $\P_W$ is the Wiener measure, i.e., the law of a standard Brownian motion.\ Then, the Cameron--Martin space is given by 
	\[
	\M_W := \bigg\{ h = \int_0^\cdot \dot{h} (s) \, ds : \dot{h} \in L^2 ([0, 1]; \bR^d) \bigg\},
	\] 
    see \cite[Section~8.1.2]{stroock_APV_V3}.\
	This example resembles the case of a time- and space-homogeneous Gaussian noise (sometimes referred to as {\em white noise}).
	\item[\textup{(ii)}]
	The framework also includes fractional settings, allowing for non-Markovian memory effects. For example, if $\P_H$ is the law of a fractional Brownian motion with Hurst parameter $H \in (0, 1)$, then $(\Omega, \M_H, \P_H)$ is an abstract Wiener space with Cameron--Martin space
	\[
	\M_H = \bigg\{ t \mapsto \int_0^t K_H (t, s) \dot{h} (s) \, ds : \dot{h} \in L^2 ([0, 1]; \bR^d)\bigg\}, 
	\]
	where, for $s,t\in [0,1]$, 
	\begin{align}  \label{eq: kernel K^H}
		K_H (t, s) := c_H \Bigg[ \bigg( \frac{t}{s} \bigg)^{H - \tfrac{1}{2}} (t - s)^{H - \frac{1}{2}} - (H - \tfrac{1}{2}) \, s^{\frac{1}{2} - H} \, \int_s^t u^{H - \frac{3}{2}} (u - s)^{H - \frac{1}{2}} \, du \Bigg]
	\end{align} 
	if $0< s<t\leq 1$, and $K_H(t,s)=0$ otherwise, see \cite[Lemma~3]{Budhiraja2025} and \cite[Equation (2.2)]{nua_06}.
\end{enumerate}
\end{example} 

In view of \cite[Theorem~8.2.6]{stroock_APV_V3}, there exists a unique linear isometric map $\mathcal{I} \colon \M \to L^2 (\Omega,\P)$ such that $\mathcal{I} (h_{\gamma}) = {_\Omega} \langle \, \cdot \, , \gamma \rangle_{\Omega^*}$ for all $\gamma \in \Omega^*$ and $\{ \mathcal{I} (h) : h \in \M \}$ is a Gaussian family in $L^2 (\Omega,\P)$. 

Let $\{ \pi_t : t \in [0, 1] \}$ be a continuous strictly monotone resolution of identity on $\M$, meaning it has the following properties:
\begin{enumerate}
	\item for every \(t \in [0, 1]\), \(\pi_t\colon \M\to \M\) is an orthogonal projection,
	\item $\pi_0 = 0, \pi_1 = \on{id}$,
	\item $\pi_{s} \M \subsetneq \pi_{t} \M$ for all $0 \leq s < t \leq 1$,
	\item $\lim_{s \to t} \pi_s h = \pi_t h$ for all $t \in [0, 1]$ and $h \in \M$.
\end{enumerate}
Finally, let $\mathcal{N}$ be the set of $\P$-null sets in $\Omega$ and define the filtration
\[
\cF_t := \sigma (\X_s, s \in [0, t]) \vee \mathcal{N} \quad\text{for } t \in [0, 1].
\] 
Throughout the paper, we impose the following standing assumption.

\begin{SA} \label{SA: filr}
For all \(t \in [0, 1]\), 
\[
\cF_t =  \sigma \big( \mathcal{I}  (\pi_t h) : h \in \M \big) \vee \mathcal{N}.
\] 	
\end{SA}

\begin{remark}\
\begin{enumerate}
		\item[(i)] Standing Assumption \ref{SA: filr} depends on the choice of $\{ \pi_t : t \in [0, 1] \}$.\ More precisely, we ask for the existence of a family $\{ \pi_t : t \in [0, 1] \}$ with the above properties such that Standing Assumption~\ref{SA: filr} is satisfied. 
		
		\item[(ii)] In the following, we explain that Standing~Assumption~\ref{SA: filr} holds for a general Volterra framework whose Cameron--Martin space \(\M\) is given through
	\[
	\M \equiv \Big\{ K \dot{h}: \dot{h} \in L^2 ([0, 1]; \bR^d) \Big\} \quad \text{with}\quad \| h \|_{\M} \equiv \| \dot{h} \|_{L^2 ([0, 1]; \bR^d)}, 
	\]
	where $K \colon L^2 ([0, 1]; \bR^d) \to C ([0, 1]; \bR^d)$ is a compact injective linear operator of Volterra-type:
	\[
	K f (t) := \int_0^1 K (t, s) f (s) \, ds, \quad\text{for } t \in [0, 1],
	\]
	with a Borel kernel $K \colon [0, 1] \times [0, 1] \to \bR^{d \times d}$ satisfying $K(t,s)=0$ for $0\leq t<s\leq 1$. 
	
	We briefly comment on the compactness hypotheses.\ By \cite[Example~4.1, p.\ 157]{kato_84}, the operator $K$ is compact whenever the Volterra kernel $(t, s) \mapsto K (t, s)$ is continuous.\ The continuity assumption is clearly not necessary.\ For example, if the kernel $K$ is given by \eqref{eq: kernel K^H}, corresponding to a fractional Brownian motion, then the operator $K$ is also compact.\ This can be deduced from \cite[Lemma~3.1]{DU_99} together with a straightforward application of the Arzel\`a--Ascoli theorem.	
	
	We now consider the continuous strictly monotone resolution of identity $(\pi_t)_{t \in [0, 1]}$, given by 
	\[
	(\pi_t h) (r) \equiv K \big(\dot{h} \1_{[0, t]}\big) (r) = \int_0^{t \wedge r} K (r, s) \dot{h} (s) \, ds \quad \text{for }t, r \in [0, 1]\text{ and } h \in \M.
	\]
	Under the assumption that, for every $t \in [0, 1]$, the
	\begin{equation} \label{eq: non-deg} \begin{split}
	\text{$L^2([0, t]; \bR^d)$-closure of }&
	\on{span} \big\{ K (r, \,\cdot \,)^\top \, v  : r \in [0, t], \, v \in \bR^d \big\} \\&\hspace{3cm}\text{ coincides with $L^2 ([0, t]; \bR^d)$,}
\end{split}
\end{equation} 
  it can be proved that the Standing Assumption~\ref{SA: filr} is satisfied, similarly to \cite[Theorem~4.3]{DU_99}. 
	
	We conclude this remark with a short discussion of the non-degeneracy condition \eqref{eq: non-deg}.\ It is satisfied, for example, when $K = \on{diag} (K_1, \ldots, K_d)$,
		where $K_i f = \int_0^1 K_i ( \, \cdot \, , s) f (s) \, ds$ is an injective operator from $L^2 ([0, t])$ to $C ([0, t])$ for every \(t \in [0, 1]\) and $i=1,\ldots, d$.\ This follows from the fact that
        $$\big\{ K_i(r, \, \cdot \,) : r \in [0, t] \big\}^\perp = \{ 0 \}\quad\text{in }L^2 ([0, t])\quad \text{for }i=1,\ldots, d.$$ 
        To prove this, let $i\in \{1,\ldots, d\}$ and take an arbitrary $u\in\big\{ K_i ( r,\,\cdot \,)  : r \in [0, t] \big\}^\perp$. Then,
		for every $r \in [0, t]$,
        $$\big\langle u, K_i (r,\,\cdot \,) \big\rangle_{L^2 ([0, t])} = K_i \big( u\1_{[0, t]}\big) (r) = 0.$$ By injectivity of $K_i$, we obtain that $u = 0$ in $L^2 ([0, t])$, which shows that the closure of $\on{span} \{ K_i (r,\, \cdot \,) : r \in [0, t] \}$ is $L^2 ([0, t])$.
        
		In particular, the non-degeneracy condition \eqref{eq: non-deg} holds for the kernel \eqref{eq: kernel K^H} associated to the fractional Brownian motion, for which Standing Assumption~\ref{SA: filr} is well-known to be satisfied by \cite[Theorem~4.3]{DU_99}.
		
		Finally, it is interesting to notice that the non-degeneracy condition is not implied by pure injectivity of the Volterra operator $K$, more precisely, the injectivity from $L^2([0, 1]; \bR^d)$ to $C ([0, 1]; \bR^d)$ alone.\ To give an example, take $d = 1$ and $K (t, s) = \1_{[0, t^2]} (s)$.\ Then, the operator $K$ is compact and injective from $L^2 ([0, 1])$ to $C ([0, 1])$, but the non-degeneracy condition fails, as, for all $0\leq r \leq t < 1$, $K (r, \, \cdot \,) \, v = 0$ on $(t^2, t] \not = \emptyset$.
        \end{enumerate}
\end{remark}

In the sequel, we investigate the question how additive noise from the abstract Wiener space $(\Omega, \M, \P)$ affects the deterministic controlled system \eqref{eq: deterministic controlled equation}. More specifically, we investigate a Laplace principle for the controlled {\em stochastic} system 
\begin{align} \label{eq: controlled stochastic equation}
	d Y_t = \mu (t, Y, \lambda_t) \, dt + \sqrt{\varepsilon} \, d \X_t, \quad Y_0 = x_0, 
\end{align}
as the noise parameter $\varepsilon > 0$ tends to zero.\ We start by describing this system in a mathematically rigorous way.\ 
Defining the set of all control strategies by
\begin{align*}
	\N &:= \Big\{ v \colon [0, 1] \to \Lambda, \, \text{Borel measurable} \Big\},
\end{align*}
we introduce the model \eqref{eq: controlled stochastic equation} through the associated solution map $\G \colon \N \times \Omega \to C ([0, 1]; \bR^d)$ to the following deterministic controlled equation: 
\begin{equation}\label{eq:Gamma-equation}
	\G_t (v, w)
	= x_0+\int_0^t\mu\big(s,\G (v,w),v_s\big)\,ds + w_t \quad \text{for } t \in [0, 1].
\end{equation}
From now on we endow $\N$ with the topology of convergence in measure w.r.t.\ the Lebesgue measure and identify all elements that are a.e.\ equal.\ This way, we turn $\N$ into a Polish space.
The following lemma can be established by standard Picard and Gronwall-type arguments.\ It records all facts about $\G$ that are used later. 

\begin{lemma}\label{lem: sol map}
	There exists a map $\G \colon \N \times \Omega \to C ([0, 1]; \bR^d)$ with the following properties:
	\begin{enumerate} 
	\item[\textup{(a)}] For every $v \in \N$ and $w \in \Omega$, the map \(t \mapsto \G_t (v, w)\) is the unique continuous solution to the equation~\eqref{eq:Gamma-equation}.
    \item[\textup{(b)}] \(\G\) is non-anticipative in the sense that 
	\[
	v, v' \in \N \text{ with } v = v' \text{ a.e.\ on } [0, t], \, w, w' \in \Omega \text{ with } w = w' \text{ on } [0, t] \implies \G_t (v, w) = \G_t (v', w').
	\] 
	\item[\textup{(c)}] For every $w \in \Omega$, the map $v \mapsto \G (v, w)$ is continuous from $\N$ to $C ([0, 1]; \bR^d)$, and $\G$ is equi-Lipschitz continuous in the second argument.\ More precisely, for all $v \in \N$, $w, w' \in \Omega$, and $t\in [0,1]$,
	\begin{align} \label{eq: Gronwall bound}
	\| \G (v, w) - \G(v, w') \|_t \leq e^L \| w - w' \|_t,
	\end{align} 
	where $L$ is the constant from Standing Assumption~\ref{SA: main coefficient}.
	\end{enumerate} 
\end{lemma}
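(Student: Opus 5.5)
We construct $\G(v,w)$ for fixed $v \in \N$ and $w \in \Omega$ by Picard iteration on $C([0,1];\bR^d)$. Define
\[
\Phi_{v,w}(\alpha)_t := x_0 + \int_0^t \mu(s,\alpha,v_s)\,ds + w_t .
\]
This is well defined: by (A2), $s \mapsto \mu(s,\alpha,v_s)$ is bounded by $L(1+\|\alpha\|_1)$. It is Borel measurable as a composition of Borel maps, since $s \mapsto (s,\alpha,v_s)$ is Borel. So $\Phi_{v,w}$ maps $C([0,1];\bR^d)$ into itself.

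By (A2),
\[
\|\Phi_{v,w}(\alpha)-\Phi_{v,w}(\alpha')\|_t \le L\int_0^t \|\alpha-\alpha'\|_s\,ds .
\]
Iterating gives $\|\Phi^n(\alpha)-\Phi^n(\alpha')\|_1 \le \frac{L^n}{n!}\|\alpha-\alpha'\|_1$. Hence some power of $\Phi_{v,w}$ is a contraction, and Banach's fixed point theorem yields a unique fixed point. Equivalently, one can use a weighted norm $\sup_t e^{-2Lt}\|\alpha(t)\|$. This proves (a); uniqueness among continuous solutions is exactly uniqueness of the fixed point.

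For (b), let $v = v'$ a.e. on $[0,t]$ and $w = w'$ on $[0,t]$. Set $Y = \G(v,w)$ and $Y' = \G(v',w')$. For $r \le t$, non-anticipativity (A1) together with the Lipschitz bound gives
\[
\|Y_r - Y'_r\| \le \int_0^r \|\mu(s,Y,v_s)-\mu(s,Y',v_s)\|\,ds \le L\int_0^r \|Y-Y'\|_s\,ds .
\]
Here the increments of $w$ and $w'$ coincide on $[0,t]$, and the controls agree a.e. there. Gronwall's inequality applied to $r \mapsto \|Y-Y'\|_r$ then gives $Y = Y'$ on $[0,t]$.

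The same computation with $v' = v$ but general $w, w'$ yields
\[
\|Y-Y'\|_t \le \|w-w'\|_t + L\int_0^t \|Y-Y'\|_s\,ds .
\]
Since $\|w-w'\|_s$ is nondecreasing in $s$, Gronwall gives $\|Y-Y'\|_t \le e^{Lt}\|w-w'\|_t \le e^L\|w-w'\|_t$, which is \eqref{eq: Gronwall bound}.

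The step needing the most care is continuity in $v$ under convergence in measure. Let $v^n \to v$ in measure, and set $Y^n = \G(v^n,w)$, $Y = \G(v,w)$. Then
\[
\|Y^n - Y\|_t \le L\int_0^t \|Y^n-Y\|_s\,ds + \int_0^1 \|\mu(s,Y,v^n_s)-\mu(s,Y,v_s)\|\,ds .
\]
By (A3), the last integrand is bounded by $m_\mu(d_\Lambda(v^n_s,v_s))$. It is also uniformly bounded by $2L(1+\|Y\|_1)$. Take any subsequence; it has a further subsequence along which $d_\Lambda(v^n_s,v_s) \to 0$ a.e. Assuming, as usual, that the modulus $m_\mu$ is continuous at $0$ with $m_\mu(0)=0$, dominated convergence makes the integral vanish along that sub-subsequence. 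This identifies the limit for every subsequence, so the whole integral tends to $0$. Alternatively, replace $m_\mu$ by $m_\mu \wedge 2L(1+\|Y\|_1)$ and argue directly in measure. Gronwall then gives $\|Y^n-Y\|_1 \le e^L \cdot o(1) \to 0$, which proves (c).

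Finally, well-definedness on equivalence classes of $\N$ follows from (b) with $t = 1$: a.e.-equal controls give identical solutions.
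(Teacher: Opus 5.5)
Your proof is correct and follows essentially the same Picard--Gronwall route as the paper. The paper cites Jacod for existence and uniqueness, obtains (b) from uniqueness on restricted intervals, and uses Gronwall plus dominated convergence for (c); your explicit contraction argument and your sub-subsequence handling of convergence in measure just fill in details the paper leaves implicit.
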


\begin{proof}
    The existence of a unique solution map $\G$ is well-known; see, e.g., \cite[Theorem~14.30]{jacod79}. The non-anticipation property in (b) follows directly from the uniqueness, which holds also on every restricted time interval $[0, t]$.\ It remains to show the continuity properties from (c).\ 
    Using the Lipschitz condition from Standing Assumption~\ref{SA: main coefficient}~(b), for every $t \in [0, 1]$, we obtain that 
\begin{align*}
    \| \G (v, w) &- \G (v', w) \|_t 
    \\&\leq \int_0^t \big\| \mu \big(s, \G (v, w), v_s\big) - \mu \big(s, \G (v', w), v'_s\big) \big\| \, ds
    \\&\leq \int_0^t L \, \| \G (v, w) - \G (v', w) \|_s \, ds + \int_0^t \big\| \mu \big(s, \G (v', w), v_s\big) - \mu \big(s, \G (v', w), v_s'\big) \big\| \, ds. 
\end{align*}
Now, Gronwall's lemma yields that 
\begin{align*}
    \| \G (v, w) - \G (v', w) \|_\infty \leq e^L\, \int_0^1 \big\| \mu \big(s, \G (v', w), v_s\big) - \mu \big(s, \G (v', w), v_s'\big) \big\| \, ds. 
\end{align*}
Letting \(v \to v'\) in \(\N\) and using the dominated convergence theorem, which is applicable by the linear growth condition from Standing Assumption~\ref{SA: main coefficient}~(b), and the continuity assumption of \(\lambda \mapsto \mu (t, w, \lambda)\) from Standing Assumption~\ref{SA: main coefficient}~(c), we obtain that \(v \mapsto \G (v, w)\) is continuous from \(\N\) to~\(C ([0, 1]; \bR^d)\). 
Finally, as above, 
\begin{align*}
    \| \G (v, w) - \G (v, w') \|_t \leq \int_0^t L \, \| \G (v, w) - \G (v, w') \|_s \, ds + \| w - w' \|_t  \quad \text{for all } t \in [0, 1],
\end{align*}
so that \eqref{eq: Gronwall bound} follows from Gronwall's lemma. 
\end{proof}

Notice that (c) implies that \((v, w) \mapsto \G (v, w)\) is jointly continuous from \(\N \times \Omega\) to \(C ([0, 1]; \bR^d)\). In particular, the map is Borel measurable. 

\smallskip 

Returning to $(\Omega, \cF, \P)$, let $\A$ be the set of all $\Lambda$-valued $(\cF_t)_{t \in [0, 1]}$-progressively measurable processes.\ For every $a \in \A$, the process $t \mapsto Y_t := \G_t (a, \sqrt{\varepsilon} \, \X)$ satisfies the dynamics \eqref{eq: controlled stochastic equation}.
For suitable cost functionals $\varphi \colon C ([0, 1]; \bR^d) \to \bR$, we are interested in identifying the limit of
\begin{align*}
	V^\varepsilon := \varepsilon \inf_{a \in \A} \log \E \bigg[ \exp \bigg( \frac{\varphi (\G (a, \sqrt{\varepsilon} \X))}{\varepsilon} \bigg) \bigg]
\end{align*}
as $\varepsilon \to 0$.\ 
In Theorem~\ref{theo: main zusammengesetzt} below, we show that the limit can be described via a two-player zero-sum deterministic game, which we introduce in the following.\
Let $\mathscr{A}$ be the set of all measurable functions $\alpha \colon \N \to \M$ with the following non-anticipation property: for all $t \in [0, 1]$, 
\begin{align*}
	v, v' \in \N, \ v = v' \text{ a.e. on } [0, t] \implies  \pi_t \alpha (v) = \pi_t \alpha (v'). 
\end{align*}
Let $\mathscr{B}$ be the set of all measurable functions $\beta \colon \M \to \N$ with the non-anticipation property that, for all $t \in [0, 1]$, 
\begin{align*}
\pi_t u = \pi_t u' \implies \beta [u] = \beta [u'] \text{ a.e. on } [0, t].
\end{align*} 
The elements of $\mathscr{A}$ are typically called the strategies for the maximizing player and the elements of $\mathscr{B}$ are typically called the strategies for the minimizing player. 
For the cost 
\begin{align} \label{eq: game}
	C (v, u) := \varphi \big(\G (v, u)\big) - \frac{1}{2} \| u \|_{\M}^2, \quad \text{for }u \in \M \text{ and } v \in \N,
\end{align} 
the lower value of the game is given by 
\[
\lv := \sup_{\alpha \in \mathscr{A}} \inf_{v \in \N} C \big(v, \alpha [v]\big),
\] 
and the upper value is given by 
\[
\uv := \inf_{\beta \in \mathscr{B}} \sup_{u \in \M} C \big(\beta [ u ], u\big).
\] 
    In general, the upper and lower values need not satisfy any special relationship.\ In Theorem~\ref{theo: main zusammengesetzt}, we give conditions for \(\lv \leq \uv\) and in Theorem~\ref{prop.suff.fairness} we give conditions for \(\uv \leq \lv\), see also Discussion~\ref{dis: PDE approach} for comments on viscosity methods.\ Adapting standard terminology, we call the game with cost \(C\) {\em fair} if the upper and lower values coincide, i.e.,
\(
\lv = \uv.
\)
Our main result is the following. Its proof is given in Section~\ref{sec:proof} below.
\begin{theorem} \label{theo: main zusammengesetzt}
Suppose that \(\varphi \colon C ([0, 1]; \bR^d) \to \bR\) is bounded and uniformly continuous.\ Then, 
\begin{equation}\label{eq.main zusammen}
		\lv \leq \liminf_{\varepsilon \downarrow 0} V^\varepsilon \leq \limsup_{\varepsilon \downarrow 0} V^\varepsilon \leq \uv.
\end{equation}
In particular, if the game with cost \(C\) is fair, then 
\begin{align*}
    \lim_{\varepsilon \downarrow 0} V^\varepsilon = \lv = \uv.
\end{align*}
\end{theorem}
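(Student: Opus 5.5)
The plan is to start from the Boué--Dupuis representation on the abstract Wiener space (Zhang). Let $\mathcal{U}_b$ denote the set of $\M$-valued processes $u$ whose "adapted" structure is $\pi_t u$ being $\cF_t$-measurable, with $\|u\|_\M$ bounded. For bounded measurable $F$ this representation gives
\[
\log \E\big[e^{F}\big] = \sup_{u} \E\Big[F(\X + u) - \tfrac12 \|u\|_\M^2\Big].
\]
Apply it with $F = \varphi(\G(a,\sqrt{\varepsilon}\X))/\varepsilon$, then rescale $u \mapsto u/\sqrt{\varepsilon}$. This yields
\[
V^\varepsilon = \inf_{a\in\A}\sup_{u}\E\Big[\varphi\big(\G(a^{u},\sqrt{\varepsilon}\X+u)\big)-\tfrac12\|u\|_\M^2\Big].
\]
Here $a^u$ denotes the control $a$ evaluated along the shifted path $\X + u/\sqrt{\varepsilon}$. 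By Girsanov/Cameron--Martin, this shifted control is still progressively measurable. Boundedness of $\varphi$ allows us to restrict to $\E\|u\|_\M^2 \le 4\|\varphi\|_\infty$ throughout. By Lemma~\ref{lem: sol map}(c) and uniform continuity of $\varphi$, we may replace $\G(\cdot,\sqrt{\varepsilon}\X+u)$ by $\G(\cdot,u)$. The error is at most $m_\varphi(e^L\sqrt{\varepsilon}\|\X\|_\infty)$ in expectation, which tends to $0$ uniformly in $a$ and $u$. Hence, up to $o(1)$, $V^\varepsilon$ equals the stochastic game value $\inf_a\sup_u\E[C(a^u,u)]$.

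\emph{Upper bound.} Fix $\beta\in\mathscr{B}$ that is $\delta$-optimal for $\uv$. Ideally we would set $a := \beta[\,\cdot\,]$ applied to the noise, but the Cameron--Martin shift $u$ is not an admissible argument for $\beta$. The construction instead uses the delayed finite-rank approximation from the functional-analytic section. Partition $[0,1]$ into steps $t_k$. From $\X$ on $[0,t_k]$, form an approximate Cameron--Martin element via finitely many Gaussian coordinates $\mathcal{I}(\pi_{t_k}e_j)$. Then feed $\beta$ with a delayed estimate of the "played" $u$, so that the resulting $a$ is progressively measurable.

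The cleaner route is to exploit that, for each fixed $\omega$, we have $u(\omega)\in\M$, so $C(\beta[u(\omega)],u(\omega))\le \sup_{u'}C(\beta[u'],u')\le \uv+\delta$. It therefore suffices to build, from $\beta$, a control $a$ with $a^u=\beta[u]$ pathwise up to a small error. By non-anticipativity of $\beta$ and the delay, $\beta[u]$ on $[t_k,t_{k+1}]$ depends only on $\pi_{t_k}u$. That quantity must be recovered from the observed path $\sqrt{\varepsilon}\X+u$, which is only possible approximately. The error is controlled via finite-rank approximations of $\beta$ together with continuity in $v$ (Lemma~\ref{lem: sol map}(c), (A3)). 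This step requires care, and I expect it to be the main obstacle.

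\emph{Lower bound.} Take $\alpha\in\mathscr{A}$ that is $\delta$-optimal for $\lv$. Since $\alpha$ need not be continuous, first replace it by a strategy that is piecewise constant in time and has finite-dimensional range. Concretely, $\pi_{t_{k+1}}-\pi_{t_k}$ applied to $\alpha$ depends only on the control on $[0,t_k]$, and its values lie in a finite-dimensional subspace. This uses the delayed contraction construction, which reads only the past and writes into the future, together with density arguments. Given any $a\in\A$, define $u:=\alpha[a]$ pathwise. The delay makes $u$ adapted in the required sense, and it is bounded after truncation of $\|\alpha\|_\M$, which costs little since $\varphi$ is bounded. Then for every $\omega$ we get $C(a(\omega),\alpha[a(\omega)])\ge\inf_v C(v,\alpha[v])\ge\lv-\delta$. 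Taking expectations, then the infimum over $a$, and then letting $\varepsilon\downarrow0$ gives $\liminf V^\varepsilon\ge\lv-2\delta$.

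The conclusion in the fair case is immediate from the two bounds. The hardest step is the adapted finite-dimensional approximation of strategies, meaning that truncation and delay keep non-anticipativity while perturbing the cost by at most $\delta$. I would handle this first using the spectral multiplicity representation of $(\pi_t)$, which reduces the problem to the Volterra/$L^2$ block-average picture.
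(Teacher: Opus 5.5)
Both bounds contain a genuine gap. \emph{Lower bound.} In the representation you derived, the control is evaluated along the shifted path. So for given $a\in\A$ and $u$ the payoff is $C(a^u,u)$ with $a^u=a(\X+u/\sqrt{\varepsilon})$, not $C(a,u)$. Setting $u:=\alpha[a]$, with $a$ read on the unshifted path, yields $C(a^u,\alpha[a])$, which is not of the form $C(v,\alpha[v])$. The pathwise inequality $C(a(\omega),\alpha[a(\omega)])\ge \lv-\delta$ therefore bounds the wrong quantity. What is needed is a fixed point $u=\alpha[a(\X+u/\sqrt{\varepsilon})]$. This, and not merely adaptedness, is what the delay of the simple strategy is for. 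For $t\in(t_k,t_{k+1}]$, $\pi_t\alpha[v]$ depends on $v$ only through its restriction to $[0,t_k]$. Using this, the paper defines $h^a$ and $v^a:=a(\sqrt{\varepsilon}\,\X+h^a)$ inductively, interval by interval, so that $h^a=\alpha[v^a]$ holds by construction. Only afterwards is adaptedness of $h^a$ checked, via Galmarino's test. (A weak formulation via a Girsanov change of measure, with the drift a functional of the observed path, could in principle sidestep the fixed point, but that is not the representation you start from.)

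\emph{Upper bound.} The route you sketch cannot be completed. First, $\beta\in\mathscr{B}$ has no built-in delay: $\beta[u]$ on $[t_k,t_{k+1}]$ depends on $\pi_{t_{k+1}}u$, not on $\pi_{t_k}u$. More seriously, $\beta$ is only Borel in $u$. Recovering $\pi_{t_k}u$ approximately from $\sqrt{\varepsilon}\,\X+u$ therefore says nothing about how far the resulting control is from $\beta[u]$ in $\N$, so continuity of $v\mapsto\G(v,w)$ and (A3) never come into play. The missing idea is to never compare controls, and instead to change the adversary's move. The paper sets $a^n_t(\omega):=Z\big(t,\beta[\sqrt{\varepsilon}\,P_n(\omega)]\big)$, where $Z$ is a Borel non-anticipative pointwise representative. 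Here $P_n(\omega)=\sum_j \mathcal{I}(e^n_j)(\omega)\,u^n_j$, with Borel versions of $\mathcal{I}(e^n_j)$, is adapted. It satisfies the exact shift identity $P_n(\omega+\eta)=P_n(\omega)+Q_n\eta$ for all $\eta\in\M$, for $\omega$ off a null set. Along the shifted path the control is then exactly $\beta[\tilde u]$ with $\tilde u:=\sqrt{\varepsilon}\,P_n+Q_nh\in\M$, so $C(\beta[\tilde u],\tilde u)\le\uv+\delta$ pathwise. All discrepancies sit in two places. In the noise argument of $\G$, Lemma~\ref{lem: sol map}(c) gives the error $e^L\big(\sqrt{\varepsilon}\|\X-P_n\|_\infty+\|h-Q_nh\|_\infty\big)$. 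In the energy, the contraction property gives $\|\tilde u\|_\M^2-\|h\|_\M^2\le\varepsilon\|P_n\|_\M^2+2\sqrt{\varepsilon}\|P_n\|_\M\|h\|_\M$. One then lets $\varepsilon\downarrow0$ for fixed $n$, and then $n\to\infty$, using uniform convergence of $Q_n$ on $\M$-balls plus Chebyshev's inequality. This closes the argument without any regularity of $\beta$.
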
 

In the next section we discuss the fairness condition from Theorem~\ref{theo: main zusammengesetzt} and provide worked examples. 

\section{Fairness and the Isaacs structure}
\label{subsec:fairness-isaacs}

Our first result relates fairness of the dynamic limiting game to the corresponding property of its static counterpart. The key tool for establishing this relation is Fan's \cite{KyFan1953} celebrated minimax theorem and the corresponding convexity and concavity notions, which are now recalled.\ We say that the cost $C$ is \textit{convex in the first variable} if, for all $v_1,v_2\in \N$ and all $\lambda\in (0,1)$, there exists $v_0\in \N$ such that, for all $u\in \M$,
\[
C(v_0,u)\leq \lambda C(v_1,u)+(1-\lambda)C(v_2,u)
\]
or, equivalently,
\[
 \varphi\big(\G (v_0, u)\big) \leq \lambda \varphi\big(\G (v_1, u)\big) +(1-\lambda)\varphi\big(\G (v_2, u)\big). 
\]
Analogously, we say that $C$ is \textit{concave in the second variable} if, for all $u_1,u_2\in \M$ and all $\lambda\in (0,1)$, there exists $u_0\in \M$ with $\|u_0\|_\M\leq \max\big\{\|u_1\|_\M,\|u_2\|_\M\big\}$ such that
\[
\lambda C(v,u_1)+(1-\lambda)C(v,u_2)\leq C( v,u_0)\quad \text{for all }v\in \N.
\]
Using \eqref{eq.main zusammen}, we have the following sufficient condition for fairness. 

\begin{theorem}\label{prop.suff.fairness}
Suppose that \(\varphi \colon C ([0, 1]; \bR^d) \to \bR\) is bounded and uniformly continuous, and assume that the cost $C$ is convex in the first variable and concave in the second variable.\ Then, the game is fair, and we have
\begin{equation}\label{eq.prop.minimax.fair}
\lim_{\varepsilon \downarrow 0} V^\varepsilon = \lv = \uv=\inf_{v\in\N}\max_{u\in\M} C(v,u)
  =
  \max_{u\in\M}\inf_{v\in\N} C(v,u).
\end{equation}
\end{theorem}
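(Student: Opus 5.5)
The plan is to sandwich the dynamic values between the static min-max quantities and then close the gap with Fan's minimax theorem. Put
\[
\underline{S} := \sup_{u\in\M}\inf_{v\in\N} C(v,u), \qquad \overline{S} := \inf_{v\in\N}\sup_{u\in\M} C(v,u).
\]

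First, we show $\underline{S} \le \lv$. For fixed $u\in\M$, the constant map $\alpha\equiv u$ is measurable and trivially non-anticipative, so $\alpha\in\mathscr{A}$. Hence $\lv \ge \inf_v C(v,u)$ for every $u$.

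Second, we show $\uv \le \overline{S}$. For fixed $v\in\N$, the constant map $\beta\equiv v$ belongs to $\mathscr{B}$. Hence $\uv \le \sup_u C(v,u)$ for every $v$.

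Third, Theorem~\ref{theo: main zusammengesetzt} gives $\lv \le \liminf V^\varepsilon \le \limsup V^\varepsilon \le \uv$. Combining the three steps,
\[
\underline{S} \le \lv \le \liminf V^\varepsilon \le \limsup V^\varepsilon \le \uv \le \overline{S}.
\]
It therefore suffices to prove $\overline{S} \le \underline{S}$ and that the supremum over $u$ is attained, both in $\inf_v\sup_u$ and in $\sup_u\inf_v$. This yields \eqref{eq.prop.minimax.fair}.

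For the minimax equality we apply Fan's theorem \cite{KyFan1953}. Its form requires a compact Hausdorff space on which the function is upper semicontinuous, with Fan-concavity in that variable and Fan-convexity in the other, and no topology on the other space. We take the maximizing variable $u$ and restrict it to a closed ball.

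Since $\varphi$ is bounded, $C(v,u) \le \|\varphi\|_\infty - \frac12\|u\|_\M^2$, while $C(v,0) \ge -\|\varphi\|_\infty$. So with $R := 2\|\varphi\|_\infty^{1/2}$, every $u$ with $\|u\|_\M > R$ is worse than $u=0$ for every $v$. Thus for each $v$ we have $\sup_{u\in\M} C(v,u) = \sup_{u\in B_R} C(v,u)$, with $B_R$ the closed $\M$-ball of radius $R$.

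Equip $B_R$ with the weak topology of $\M$. It is then compact and metrizable, since $\M$ is separable. We need $u\mapsto C(v,u)$ to be upper semicontinuous on $B_R$, and this is where the main work lies.
\begin{itemize}
\item The term $-\frac12\|u\|_\M^2$ is weakly upper semicontinuous.
\item For the term $\varphi(\G(v,u))$, we use that the embedding $\M\hookrightarrow\Omega$ is compact. This is standard for abstract Wiener spaces, since the ball $B_R$ is $\P$-Donsker/compact in $\Omega$, e.g.\ \cite{stroock_APV_V3}. So weakly convergent sequences in $B_R$ converge uniformly in $\Omega$.
\item By \eqref{eq: Gronwall bound} in Lemma~\ref{lem: sol map}, $\G(v,\cdot)$ is Lipschitz from $\Omega$ to $C([0,1];\bR^d)$, and $\varphi$ is continuous. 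Hence $u\mapsto\varphi(\G(v,u))$ is weakly continuous on $B_R$.
\end{itemize}

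Concavity in $u$ must also hold within $B_R$. This is exactly why the definition requires $\|u_0\|_\M\le\max\{\|u_1\|_\M,\|u_2\|_\M\}$: it keeps $u_0\in B_R$. Convexity in $v$ is assumed directly.

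Fan's theorem then yields
\[
\inf_{v\in\N}\max_{u\in B_R} C(v,u) = \max_{u\in B_R}\inf_{v\in\N} C(v,u).
\]
Here the inner maximum on the left is attained by upper semicontinuity and compactness. The outer maximum on the right is attained because an infimum of upper semicontinuous functions is upper semicontinuous.

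Finally, we remove the restriction to $B_R$. On the left, the maxima over $B_R$ and over $\M$ coincide by the choice of $R$. On the right, the same estimate shows that $\inf_v C(v,u) < \inf_v C(v,0)$ for $\|u\|_\M>R$, so the maximum over $\M$ is attained inside $B_R$. This gives $\overline{S} = \underline{S}$ with both extrema attained, and the chain of inequalities collapses to \eqref{eq.prop.minimax.fair}.

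The step I expect to be delicate is the weak upper semicontinuity, specifically the justification of the compact embedding $\M\hookrightarrow\Omega$. If no reference is available, I would argue it directly: $B_R$ is the image of the unit ball under the covariance square root, and that operator is compact because the Gaussian measure is Radon. The remaining steps are bookkeeping.
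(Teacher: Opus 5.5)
Your proposal is correct and follows essentially the same route as the paper. You restrict $u$ to a ball $B_R$ using boundedness of $\varphi$, obtain weak upper semicontinuity from the compact embedding $\M\hookrightarrow\Omega$ and Lemma~\ref{lem: sol map}, apply Fan's minimax theorem, and sandwich $\lv,\uv$ between the static values via constant strategies and Theorem~\ref{theo: main zusammengesetzt}; your explicit remarks that the norm constraint in the concavity definition keeps $u_0\in B_R$, and that the outer maximum over $\M$ is attained inside $B_R$, make precise two points the paper leaves implicit.
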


\begin{proof}
Since $\varphi$ is bounded, for all $\delta>0$, we have
\[
  - \| \varphi\|_\infty \leq \sup_{u\in \M}C(v,u)\le \|\varphi\|_\infty-\frac12\|u_\delta[v]\|_{\mathcal H}^2+\delta\quad \text{for all } v\in \N,
\]
where $u_\delta[v]\in \M$ is a $\delta$-optimizer for $C(v,u)$.\ Letting $\delta\downarrow 0$, we thus find that
\[
  \sup_{u\in\M} C(v,u)
  =
  \sup_{u\in B_R} C(v,u)
  \quad\text{for all } v\in\N,
\]
where
\[
  B_R:=\big\{u\in\M:\|u\|_{\M}\le R\big\}\quad \text{with}\quad  R>4\sqrt{\|\varphi\|_\infty}.
\]
As, by \cite[Corollary 8.3.10, p.\ 324]{zbMATH05851950}, the identity $\M\hookrightarrow \Omega$ is compact, weak convergence of a sequence $(u_n)_{n\in \mathbb N}\subseteq \M$ implies convergence of $(u_n)_{n\in \mathbb N}$ in $\Omega$.\ Together with the continuity of the solution map $\G$ in the second variable
and the continuity of $\varphi$, this yields weak sequential continuity
of the map
\begin{equation}\label{eq.map.minimax}
  \M\to \mathbb R,\quad u\mapsto \varphi\big(\G(v,u)\big)
\end{equation}
for all $v\in \N$. Since weak sequential continuity implies weak continuity on norm-bounded subsets of the separable Hilbert space $\M$, it follows that the map \eqref{eq.map.minimax} is weakly continuous on $B_R$ for all $v\in \N$.\ Moreover, the norm $\|\cdot\|_\M$ is weakly lower semicontinuous, so that the map $C(v,\,\cdot \,)$ is weakly upper semicontinuous on the weakly compact set $B_R$ for every $v\in \N$.\ Therefore, by Fan's minimax theorem \cite[Theorem~2]{KyFan1953},
\[
  \inf_{v\in\N}\max_{u\in B_R} C(v,u)
  =
  \max_{u\in B_R}\inf_{v\in\N} C(v,u).
\]
Consequently,
\[
\inf_{v\in\N}\sup_{u\in \M} C(v,u)=\inf_{v\in\N}\max_{u\in B_R} C(v,u)= \max_{u\in B_R}\inf_{v\in\N} C(v,u)\leq \sup_{u\in \M}\inf_{v\in\N} C(v,u)\leq \inf_{v\in\N}\sup_{u\in \M} C(v,u),
\]
i.e.,
\begin{equation}\label{eq.minimax}
  \inf_{v\in\N}\max_{u\in\M} C(v,u)
  =
  \max_{u\in\M}\inf_{v\in\N} C(v,u).
\end{equation}
Since constant strategies are admissible for both players, by \eqref{eq.main zusammen}, we have
\[
  \max_{u\in\M}\inf_{v\in\N} C(v,u)
  \le \lv
  \le \uv
  \le
  \inf_{v\in\N}\max_{u\in\M} C(v,u).
\]
Thus, the minimax identity \eqref{eq.minimax} implies
\[
  \lv=\uv=\inf_{v\in\N}\max_{u\in\M} C(v,u)
  =
  \max_{u\in\M}\inf_{v\in\N} C(v,u),
\]
 so that the game is fair and \eqref{eq.prop.minimax.fair} follows from Theorem \ref{theo: main zusammengesetzt}.
\end{proof}

For intuition, Theorem~\ref{prop.suff.fairness} shows that, under suitable convexity assumptions, fairness of a static game propagates to its dynamic counterpart.\ It is natural to investigate how sharp this condition is in general. As the next example shows, fairness may hold for the dynamic game while it fails for its static counterpart. 

\begin{example}
Let $d=1$, let \(\P = \P_W\) the standard Wiener measure, let $\Lambda=[-\kappa,\kappa]$ for \(\kappa > 0\), and set
\[
  \mu(t,\omega,\lambda)=\lambda.
\]
Fix $T\in(0,1)$ and let
\[
  \varphi(\omega) := - \min \left \{ \frac{\omega (T)^2}{2}, M \right \}, \quad \text{for }\omega \in C ([0, 1]).
\]
    Take \(x_0=0\)
and define
\[
  C_{\on{stat}}(\lambda ,x)
  :=
  -\min\left\{\frac{(\lambda T+x)^2}{2},M\right\}
  -\frac{x^2}{2T},
  \qquad
  \lambda \in[-\kappa,\kappa],\ x\in\mathbb R.
\]
The static outer values are
\begin{align}
  \lv_{\mathrm{stat}}
  &:=
  \sup_{x\in\mathbb R}
  \inf_{\lambda \in[-\kappa,\kappa]} C_{\on{stat}}(\lambda ,x)
  =
  -\min\left\{
    \frac{\kappa^2T^2}{2}, M
  \right\},
  \label{eq:negative-quadratic-static-lower}\\
  \uv_{\mathrm{stat}}
  &:=
  \inf_{\lambda \in[-\kappa,\kappa]}
  \sup_{x\in\mathbb R} C_{\on{stat}}(\lambda,x)
  =
  -\min\left\{
\frac{\kappa^2T^2}{2(1+T)}, M
  \right\}.
  \label{eq:negative-quadratic-static-upper}
\end{align}
Consequently,
\[
  \lv_{\mathrm{stat}}< \uv_{\mathrm{stat}}
  \quad\text{whenever}\quad
  M>
  \frac{\kappa^2T^2}{2(1+T)}.
\]
 On the other hand, as explained in Discussion~\ref{dis: PDE approach}, viscosity methods yield that the dynamic game is fair with value 
 \[
 I^\pm = I^+_{\on{stat}}.
 \]
Thus, in this example,
\[
  \lv_{\mathrm{stat}}
  <
  \uv_{\mathrm{stat}}
  \qquad\text{but}\qquad
  \lv = \uv.
\]
\end{example}

Next, we also discuss the viscosity method to establish fairness.

\begin{discussion} \label{dis: PDE approach}
(i) An important method to establish fairness of a two-player game is related to the Isaacs condition and viscosity theory for Isaacs PDEs.\ We sketch the basic idea without going into too much detail.\ For precise statements, we refer to the literature cited below.\ Consider first the
finite-dimensional Markovian specialization
\[
  \mu(t,\omega,\lambda)=b(t,\omega(t),\lambda),
  \qquad
  \varphi(\omega)= \psi\big(\omega(1)\big), 
\]
with \(\P = \P_W\), the standard Wiener measure. 
For $p\in\mathbb R^d$, the lower and upper Hamiltonians are
\begin{align*}
  H^-(t,x,p)
  &:=\sup_{z\in\mathbb R^d}\inf_{\lambda\in\Lambda}
    \left\{
       \langle p, b(t,x,\lambda) \rangle + \langle p, z \rangle -\frac12\|z\|^2
    \right\},\\
  H^+(t,x,p)
  &:=\inf_{\lambda\in\Lambda}\sup_{z\in\mathbb R^d}
    \left\{
      \langle p, b(t,x,\lambda) \rangle + \langle p, z\rangle - \frac12 \|z\|^2
    \right\}.
\end{align*}
Under suitable assumptions on $b$ and $\psi$, dynamic programming techniques show that the lower and upper values $I^\pm$ of the game with cost $C$ are time-zero values of viscosity solutions of the backward Isaacs PDEs 
\[
- \frac{u^{\pm}}{dt} (t, x) - H^{\pm} \big(t, x, \nabla_x u^\pm (t, x)\big) = 0, \quad u (1, x) = \psi (x).
\]
Due to the additive noise structure, the two controls occur in separate terms, which shows that 
\begin{equation*}
   H^-(t,x,p)
  =\inf_{\lambda\in\Lambda} \langle p, b(t,x,\lambda) \rangle
    +\frac12\|p\|^2
  = H^+(t,x,p).
  \label{eq:brownian-isaacs-identity}
\end{equation*}
In other words, the Isaacs condition is satisfied and the Isaacs PDEs for $I^\pm$ are the same. As a consequence, the game is fair whenever the PDE has a unique viscosity solution. For a textbook treatment of this approach, we refer to \cite[Chapter~XI]{FleSon_06}.

A similar strategy also works in settings with a fully path-dependent coefficient $\mu$, with the important exception that then viscosity theory for path-dependent PDEs is required, see \cite{peng_et_all_PPDE_survey}. We refer to the recent works \cite{PTZ_20, TZ_26} for results in this direction. 

\smallskip
(ii) We emphasize that the above discussion also extends to certain fractional frameworks whenever we restrict our attention to terminal costs. To simplify the exposition, let $d=1$, $\Lambda=[-\kappa,\kappa]$, and set
\[
  \mu(t,\omega,\lambda)=\lambda \quad \text{for } t \in [0, 1],\, \omega \in C ([0, 1] ), \text{ and } \lambda \in \Lambda.
\]
Fix $T\in(0,1)$ and let
\[
  \varphi(\omega)=\psi\big(\omega(T)\big) \quad \text{for } \omega \in C ([0, 1]), 
\]
where $\psi \colon\mathbb R\to\mathbb R$ is bounded and uniformly continuous.
Lastly, take $K=K_H$ to be the fractional kernel from \eqref{eq: kernel K^H} with Hurst parameter $H\in(0,1)$. For $h \in \M$ write $h=K_H \dot{h}$ with $\dot{h}\in L^2([0,1])$.  Under this identification, for all $h \in \M$ and $t \in [0, 1]$, 
\[
  \|h\|_\M^2=\int_0^1 |\dot{h} (s)|^2\,ds,
  \qquad
  \pi_t h =K_H(\dot{h}\mathbf 1_{[0,t]}).
\]
With $k_T :=K_H(T, \, \cdot \,)\1_{[0,T]}$, the values of the game with cost $C$ agree with those of the one-dimensional auxiliary differential game
\begin{equation*}
  d Y_t = \big( \lambda_t + k_T(t) \, \dot{h} (t) \big) \, dt,
  \qquad Y_0 = x_0,
  \label{eq:fractional-terminal-lift}
\end{equation*}
with cost
\begin{equation*}
  J(y, h)
  :=\psi(y_T)-\frac12\int_0^T | \dot{h} (t) |^2\,dt.
  \label{eq:fractional-terminal-payoff}
\end{equation*}
In other words, we transformed the game associated to fractional noise into a game associated to Brownian noise with a time-dependent coefficient. The latter game is covered by the viscosity approach outlined in part (i) of this discussion, see again \cite{FleSon_06}.
\end{discussion} 

We conclude this section with an explicitly solved example that explains the influence of memory effects of the noise on the system. 
\begin{example} \label{ex: main example}
	Let $K=K_H$ be the fractional kernel from \eqref{eq: kernel K^H} with Hurst parameter $H\in(0,1)$ in dimension $d=1$, and take a compact action set $\Lambda= [- \kappa, \kappa]$
	for some $\kappa>0$.  Moreover, let
	\[
	\mu(t,\omega,\lambda)=\lambda
	\quad \text{for }t\in[0,1],\, \omega\in C ([0, 1]),\text{ and } \lambda\in\Lambda .
	\]
	Then, for $t\in[0,1]$, $v\in\N$,  and $u\in\M$,
	\begin{align*} \label{eq: example controlled system}
	\G_t(v,u)=x_0+\int_0^t v_s\,ds+u_t .
	\end{align*} 
	We consider a cost $\varphi (\omega) := \psi \big(\omega (T)\big)$ for the bounded uniformly continuous function 
    \[
    \psi (x) = \min \{ x^2/2, M \}, \quad \text{with } M > \frac{[(|x_0|-\kappa T)_+]^2}{2(1-T^{2H})^2},
    \]
    and a terminal time horizon $T \in (0, 1)$.\ 
    It is straightforward to show that
	\begin{align*}
	\lv = \uv &= \frac{[(|x_0|-\kappa T)_+]^2}
  {2(1-T^{2H})}.
	\end{align*} 
	In particular, Theorem~\ref{theo: main zusammengesetzt} yields that 
	\[
	\lim_{\varepsilon \downarrow 0} V^\varepsilon = \frac{[(|x_0|-\kappa T)_+]^2}
  {2(1-T^{2H})}.
	\] 
	We observe that this function is decreasing in $H$.\
	On an intuitive level, this monotonicity may be interpreted as an indication that long-range dependence has more influence on the small-noise limit than short-range dependence.
\end{example}

\section{Proof of Theorem~\ref{theo: main zusammengesetzt}}\label{sec:proof}
This section is dedicated to the proof of our main result, Theorem~\ref{theo: main zusammengesetzt}.\ Namely, we prove the following inequalities:
\begin{align} \label{eq: main}
	\lv \leq \liminf_{\varepsilon \downarrow 0} V^\varepsilon \leq \limsup_{\varepsilon \downarrow 0} V^\varepsilon \leq \uv.
\end{align} 

\subsection{Functional analytic tools}\label{sec:funkana}
Before we turn to the proof of \eqref{eq: main}, we develop some mathematical tools which play a crucial role in the proof of \eqref{eq: main}.\ We start with the following version of the celebrated multiplicity theorem for normal operators, cf.\ \cite[Theorem 10.1, p.\ 293]{conway}.\ For the sake of a self-contained exposition, we provide a short proof.

For a family $\eta=(\eta_m)_{m\in M}$ of probability measures on $[0,1]$ with $M\subseteq \mathbb N$ nonempty, we define the Hilbert direct sum by
\[
 \L:= \bigoplus_{m\in M}L^2\big([0,1],\eta_m\big)
  :=
  \bigg\{
    f=(f_m)_{m\in M} \in \prod_{m\in M} L^2\big([0,1],\eta_m\big):
    \|f\|_\L<\infty
  \bigg\}
\]
with
\[
\|f\|_\L:=\bigg(\sum_{m\in M} \|f_m\|^2_{L^2([0,1],\eta_m)}\bigg)^{1/2}\quad \text{for }f=(f_m)_{m\in M}\in\prod_{m\in M} L^2\big([0,1],\eta_m\big).
\]
Then, $\L$ is a Hilbert space with the inner product
\[
  \langle f,g\rangle_\L
  :=
  \sum_{m\in M}
  \langle f_m,g_m\rangle_{L^2([0,1],\eta_m)}
  \quad\text{for }
  f=(f_m)_{m\in M},\, g=(g_m)_{m\in M}\in \L.
\]

\begin{proposition}\label{prop.spectral.mult}
  There exists a nonempty set $M\subseteq \mathbb N$, a family of atomless probability measures $\eta=(\eta_m)_{m\in M}$ on $[0,1]$, and a unitary operator
  \[
  U\colon \M\to \L,\quad h\mapsto (\dot h_m)_{m\in M}
  \]
  with inverse $K:=U^{-1}\colon \L\to \M$ such that
  \begin{equation}\label{eq.spectral.decomposition}
  \pi_t h=K\big( (\dot h_m\1_{[0, t]})_{m\in M}\big)\quad \text{for all }t\in [0,1]\text{ and }h\in \M.
  \end{equation}
\end{proposition}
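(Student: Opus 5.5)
We follow the standard Hahn--Hellinger construction of cyclic subspaces for the projection-valued measure generated by $\{\pi_t\}$. Since $t\mapsto \pi_t$ is an increasing, strongly continuous family of orthogonal projections with $\pi_0=0$ and $\pi_1=\on{id}$, it defines a projection-valued measure $E$ on $\mathcal{B}([0,1])$ with $E((s,t])=\pi_t-\pi_s$ and $E(\{0\})=\pi_0=0$. For $h\in\M$, the scalar measure $\nu_h(A):=\langle E(A)h,h\rangle_\M$ has distribution function $t\mapsto\|\pi_t h\|_\M^2$. This function is continuous by property (4), so $\nu_h$ is atomless.

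\textbf{Cyclic decomposition.} Since $\M$ is separable, fix a countable dense sequence $(g_k)_{k\in\mathbb N}$. Build orthonormal-type cyclic vectors inductively. Let $h_1$ be the first nonzero $g_k$, normalized, and set
\[
Z_1:=\overline{\on{span}}\{E(A)h_1:A\in\mathcal B([0,1])\}.
\]
Given $Z_1,\dots,Z_{m-1}$, which are mutually orthogonal and each invariant under all $E(A)$, take the first $g_k$ not in $\bigoplus_{j<m}Z_j$. Let $h_m$ be its normalized projection onto the orthogonal complement, which is again $E$-invariant because the $E(A)$ are self-adjoint. Then define $Z_m$ as the cyclic subspace generated by $h_m$. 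This gives an index set $M\subseteq\mathbb N$, finite or not, with $\M=\bigoplus_{m\in M}Z_m$, since every $g_k$ lies in the sum. The set $M$ is nonempty because $\M\neq\{0\}$; indeed, $\pi_s\M\subsetneq\pi_t\M$.

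\textbf{Unitaries on cyclic pieces.} Let $\eta_m:=\nu_{h_m}$, an atomless probability measure because $\|h_m\|_\M=1$. On simple functions define
\[
K_m f:=\sum_i c_i E(A_i)h_m \quad\text{for } f=\sum_i c_i\1_{A_i}.
\]
Using $E(A)E(B)=E(A\cap B)$ and self-adjointness, one gets $\|K_m f\|_\M^2=\int|f|^2\,d\eta_m$. Hence $K_m$ extends to an isometry $L^2([0,1],\eta_m)\to Z_m$. Its range is closed and contains every $E(A)h_m$, so $K_m$ is onto $Z_m$.

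\textbf{The unitary $U$ and the spectral identity.} Put $K((f_m)_m):=\sum_m K_m f_m$. By orthogonality of the $Z_m$, this is a unitary $\L\to\M$, and we set $U:=K^{-1}$. The key identity is $E(A)K_m f=K_m(\1_A f)$. For simple $f$ this follows from multiplicativity of $E$, and it extends to all of $L^2([0,1],\eta_m)$ by continuity. Taking $A=[0,t]$ gives $E([0,t])=\pi_t$, using $E(\{0\})=0$. Hence, for $h=K((\dot h_m)_m)$,
\[
\pi_t h=\sum_m K_m(\dot h_m\1_{[0,t]})=K\big((\dot h_m\1_{[0,t]})_m\big),
\]
which is \eqref{eq.spectral.decomposition}.

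\textbf{Main obstacle.} The only genuinely delicate step is constructing $E$ from $\{\pi_t\}$ as a countably additive projection-valued measure. First, $E$ is defined on half-open intervals, where finite additivity and commutativity are immediate because $\pi_s\pi_t=\pi_{s\wedge t}$. Then, for each $h$, the scalar premeasure $\langle E(\cdot)h,h\rangle_\M$ extends by Carathéodory, using monotonicity and right continuity of $t\mapsto\|\pi_t h\|^2$. Polarization then gives complex measures $\langle E(\cdot)h,g\rangle_\M$, and a Riesz representation argument defines $E(A)$ as an operator. Multiplicativity is checked on the $\pi$-system of intervals and extended by a monotone class argument. Alternatively, one could simply cite the spectral theorem for the bounded self-adjoint operator $T:=\int_0^1 t\,d\pi_t$ to obtain $E$ directly.
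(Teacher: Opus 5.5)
Your proposal is correct and follows essentially the same route as the paper. Both arguments build the atomless projection-valued measure $E$ from $(\pi_t)_{t\in[0,1]}$, split the space into mutually orthogonal $E$-invariant cyclic subspaces, identify each with $L^2([0,1],\eta_m)$ via $K_m$ using $E(A)K_m f=K_m(\1_A f)$, and sum up. The differences are minor: you get the orthogonal cyclic family by a Gram--Schmidt-type induction along a dense sequence, which gives $M\subseteq\mathbb N$ and completeness of the decomposition directly, where the paper uses Zorn's lemma plus a maximality argument; and you also sketch the construction of $E$, which the paper takes for granted.
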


\begin{proof}
We prove the result by decomposing the spectral measure associated with the spectral family $(\pi_t)_{t\in [0,1]}$ into cyclic subspaces.

First, the increasing family $(\pi_t)_{t\in[0,1]}$ induces a projection-valued (PV)
measure
\[
  E\colon\mathcal B([0,1])\to L(\mathcal H)
\]
defined on half-open intervals by
\[
  E\big((s,t]\big) := \pi_t-\pi_s,
  \qquad 0\le s<t\le1,
\]
and extended to the Borel sets with \(E \big( \{ 0 \} \big) := 0\).\ In particular,
\[
  E\big([0,t]\big)=\pi_t.
\]
Since the projections are increasing, \(\pi_s\pi_t=\pi_t\pi_s=\pi_s\) for
\(0\leq s\le t\leq 1\), and therefore \(\pi_t-\pi_s\) is again an orthogonal projection.\ Moreover, the strong continuity of \([0,1]\to L(\M),\, t\mapsto\pi_t\) implies that \(E\) has no atoms, i.e., $E(\{t\})= 0$ for all $t\in [0,1]$.\ Indeed, 
\[
  E(\{t\})h
  =
  \pi_th-\lim_{s\uparrow t}\pi_sh
  =
  0\quad \text{for }t\in(0,1]\text{ and }h\in \M,
\]
and, by definition, $E(\{0\})=\pi_0=0$.

Now fix a vector \(x\in\mathcal H\) with $\|x\|_\M=1$, and define its scalar spectral
measure by
\[
  \eta_x(B):=\langle E(B)x,x\rangle_\M
  \quad \text{for } B\in\mathcal B([0,1]).
\]
Since $\|x\|_{\mathcal H}=1$ and $\pi_1=\on{id}$, it follows that 
\[
  \eta_x([0,1])
  =
  \langle E([0,1])x,x\rangle_{\mathcal H}
  =
  \|x\|_{\mathcal H}^2
  =
  1,
\]
so that $\eta_x$ is an atomless probability measure.\ Let
\[
  \mathcal H_x
  :=
  \overline{\operatorname{span}}
  \big\{E(B)x:B\in\mathcal B([0,1])\big\}
\]
be the cyclic subspace generated by $x$.\ We claim that $\M_x$ is unitarily equivalent to $L^2\big([0,1],\eta_x\big)$.\ For a simple function
\[
  f=\sum_{i=1}^n a_i\1_{B_i}
\]
with $a_1,\ldots, a_n\in \mathbb R$, $B_1,\ldots, B_n\in \mathcal B([0,1])$, and $n\in \mathbb N$, define
\[
  K_x f
  :=
  \sum_{i=1}^n a_iE(B_i)x.
\]
Using the fact that $E$ is a PV measure, we obtain
\[
\begin{aligned}
  \|K_x f\|_{\mathcal H}^2
  &=
  \sum_{i,j=1}^n a_ia_j\big\langle
     E(B_i)x, E(B_j)x
  \big\rangle_\M = \sum_{i,j=1}^n a_ia_j\big\langle
     E(B_i)E(B_j)x, x
  \big\rangle_\M \\
  &=\sum_{i,j=1}^n a_ia_j\big\langle
     E(B_i\cap B_j)x, x
  \big\rangle_\M=
  \int_{[0,1]} |f(\theta)|^2\,\eta_x(d\theta).
\end{aligned}
\]
Therefore $K_x$ extends uniquely to an isometry 
\[
K_x\colon L^2\big([0,1],\eta_x\big)\to \M.
\]
By construction, its range is precisely $\M_x$, so that $K_x$ is unitary.\ Moreover, for all $f\in L^2\big([0,1],\eta_x\big)$ and  $B\in \mathcal B([0,1])$,
\begin{equation}\label{eq.property.cyclic}
  E(B)K_x f
  =
  K_x(\1_B f).
\end{equation}
Thus, on the cyclic subspace $\M_x$, the PV measure $E$ is represented by multiplication by indicator functions, and it remains to decompose $\M$ into mutually orthogonal cyclic
subspaces.\ Using Zorn's lemma, there exists a maximal family of non-zero mutually orthogonal cyclic
subspaces $(\M_{x_m})_{m\in M}$ with normalized cyclic generators $(x_m)_{m\in M}\subseteq \M$.\ Since $E\big([0,1]\big)=\pi_1=\on{id}$, it follows that the family $(x_m)_{m\in M}$ is orthonormal.\ Since $\M$ is separable, it follows that $M$ is at most countable.\ Thus we may take $M\subseteq\mathbb N$. 

Finally, we show that the cyclic subspaces span all of $\M$.\ To that end, let $y\in \M\setminus\{0\}$ with 
$$y\perp \overline{\rm span}\bigg(\bigcup_{m\in M}\M_{x_m}\bigg)$$
Then, for all Borel sets $A,B\in \mathcal B([0,1])$ and $m\in M$,
\[
  \langle E(A)y,E(B)x_m\rangle_\M  =
  \langle y,E(A)E(B)x_m\rangle_\M  =
  \langle y,E(A\cap B)x_m\rangle_\M  =0,
\]
so that $\M_y\perp \M_{x_m}$ for all $m\in M$, which contradicts the maximality of the family $(\M_{x_m})_{m\in M}$.\ 

For each $m\in M$, let 
\[
  P_m\colon\M\to\M_{x_m}
\]
denote the orthogonal projection onto the $m$-th cyclic subspace $\M_{x_m}$.\ Define
\[
  U\colon \M\to
  \L
\]
by
\[
  Uh
  := \big(\dot h_m\big)_{m\in M}:=
  \bigl(K_m^{-1}P_mh\bigr)_{m\in M}\quad \text{for }h\in \M.
\]
By definition, its inverse $K:=U^{-1}$ is given by
\[
  K\big((f_m)_{m\in M}\big)
  =
  \sum_{m\in M}K_m f_m\quad \text{for }f=(f_m)_{m\in M}\in \L,
\]
where the right-hand side converges in $\M$.\ By \eqref{eq.property.cyclic} with $B=\1_{[0,t]}$ for $t\in [0,1]$, it follows that 
\[
 \pi_t h=\sum_{m\in M} \pi_tK_m \dot h_m =\sum_{m\in M} K_m \big(\1_{[0,t]}\dot h_m\big)=K\big( (\dot h_m\1_{[0, t]})_{m\in M}\big)\quad \text{for all }h\in \M.
\]
The proof is complete.
\end{proof}

We point out that, in the cases described in Example~\ref{ex: wiener space}, the spectral multiplicity, i.e., the cardinality of the set $M$, equals the dimension $d$.

The following technical auxiliary result plays a central role in the subsequent discussion.

\begin{lemma} \label{SA: partition} 
	For every \(n \in \mathbb{N}\), there exists a partition \(0 = t^n_1 < \dots < t^n_{N_n} = 1\) with $N_n\in \mathbb N$
	and vectors \(e^n_1, \dots, e^n_{N_n}, u^n_1, \dots, u^n_{N_n} \in \M\) with the following properties: 
	\begin{enumerate}
		\item[(i)] For every \(j \in \{1,\ldots,N_n\}\), it holds \(e^n_j \in \pi_{t^n_j} \M\) and \(\pi_t u^n_j = 0\) for all \(0\leq t \leq t^n_j\). 
		\item[(ii)] The operator \(Q_n \colon \M\to \M\), defined by 
		\[
		Q_n h := \sum_{j = 1}^{N_n} \langle h, e^n_j \rangle_\M\, u^n_j \quad \text{for all }h \in \M, 
		\]  
		is a contraction and has the property that, for every \(R > 0\), 
		\[
		\sup_{ \substack{h \, \in \, \M \\ \| h \|_{\M} \leq R }} \| Q_n h - h \|_\infty \to 0 \quad\text{as } n \to \infty. 
		\] 
	\end{enumerate}
\end{lemma}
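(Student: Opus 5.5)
The plan is to work entirely in the spectral representation of Proposition~\ref{prop.spectral.mult}. Through the unitary $U$ we identify $\M$ with $\L=\bigoplus_{m\in M}L^2([0,1],\eta_m)$, and $\pi_t$ becomes multiplication of every component by $\1_{[0,t]}$. In these coordinates, $e\in\pi_{t_j}\M$ means that all components of $Ue$ vanish $\eta_m$-a.e.\ on $(t_j,1]$. Likewise, $\pi_t u=0$ for $t\le t_j$ means that all components of $Uu$ vanish on $[0,t_j]$. So $Q_n$ will be a finite-rank operator that reads $h$ on a block of time and writes the result into a strictly later block: a delayed block-averaging operator.

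\emph{Construction.} Fix $n$ and let $M_n:=M\cap\{1,\dots,n\}$. Each $\eta_m$ is atomless, so $t\mapsto\eta_m([0,t])$ is continuous. Hence there is a partition $0=s_0<\dots<s_K=1$ with $\max_{m\in M_n}\eta_m((s_{k-1},s_k])\le 1/n$ and with mesh $\le 1/n$. Write $I_k:=(s_{k-1},s_k]$. For $m\in M_n$ and $k<K$ with $\eta_m(I_k)\eta_m(I_{k+1})>0$, put
\[
e_{k,m}:=K\big(\eta_m(I_k)^{-1/2}\1_{I_k}\delta_m\big),\qquad u_{k,m}:=K\big(\eta_m(I_{k+1})^{-1/2}\1_{I_{k+1}}\delta_m\big),
\]
where $\delta_m$ denotes placement in the $m$-th component. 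Then $Q_n h:=\sum_{k,m}\langle h,e_{k,m}\rangle_\M u_{k,m}$ replaces $\dot h_m$ on $I_{k+1}$ by the constant $\langle\dot h_m,\1_{I_k}\rangle/\sqrt{\eta_m(I_k)\eta_m(I_{k+1})}$. Its squared norm on $I_{k+1}$ is
\[
\frac{\langle\dot h_m,\1_{I_k}\rangle_{L^2(\eta_m)}^2}{\eta_m(I_k)}\le\int_{I_k}|\dot h_m|^2\,d\eta_m
\]
by Cauchy--Schwarz. Summing over $k$ and $m$ gives $\|Q_nh\|_\M\le\|h\|_\M$, so $Q_n$ is a contraction.

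\emph{Bookkeeping.} The lemma wants one pair $(e^n_j,u^n_j)$ per partition point. To get this, inside each $I_{k+1}$ I insert $|M_n|$ extra distinct points, all lying in a short initial piece of $I_{k+1}$. The pair $(e_{k,m},u_{k,m})$ is attached to one of these points $t^n_j$. Since $t^n_j\ge s_k$, we still have $e_{k,m}\in\pi_{s_k}\M\subseteq\pi_{t^n_j}\M$. To keep $\pi_tu^n_j=0$ for $t\le t^n_j$, I let $u_{k,m}$ use only the part of $I_{k+1}$ after the inserted points, normalized accordingly. The contraction estimate is unaffected by this change. Points carrying no term get $e=u=0$.

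\emph{Convergence.} Let $\iota\colon\M\hookrightarrow\Omega$ be the embedding, which is compact \cite[Corollary~8.3.10]{zbMATH05851950}. The claim is equivalent to $\|\iota(Q_n-\on{id})\|_{\M\to\Omega}\to0$. First I show $Q_n^*\to\on{id}$ strongly. The adjoint $Q_n^*$ is the reverse shift: it reads block $I_{k+1}$ and writes an average into $I_k$, and it is again a contraction. Both $\iota$ and $\iota^*$ are compact, so a uniformly bounded strongly convergent sequence converges uniformly on the compact set $\iota^*(\text{unit ball of }\Omega^*)$. This gives $\|(Q_n^*-\on{id})\iota^*\|\to0$, hence $\|\iota(Q_n-\on{id})\|\to0$ by taking adjoints.

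For strong convergence, uniform boundedness reduces the problem to a dense set of $f\in\L$. It suffices to take finitely many nonzero components, each of the form $g\circ F_m$ with $g$ continuous and $F_m(t)=\eta_m([0,t])$. For such $f$, the block average over $I_k$ and the value on $I_{k\pm1}$ differ by at most the oscillation of $g$ over a window of $F_m$-length $\le2/n$. This tends to zero uniformly. Blocks of zero $\eta_m$-mass contribute nothing.

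I expect the main obstacle to be this strong-convergence step: making the density argument precise in the $\eta_m$-time scale, and checking that removing the initial piece of $I_{k+1}$ (needed for the bookkeeping) does not spoil it. I would handle the latter by choosing the inserted points so that the removed piece has $\eta_m$-mass at most $\eta_m(I_{k+1})/n$. The resulting error is then of relative size $O(1/n)$.
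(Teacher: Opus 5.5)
There is a genuine gap in your strong-convergence step. By your own formula, on $I_{k+1}$ (or its trimmed part) $Q_n$ writes
\[
\frac{\langle \dot h_m,\1_{I_k}\rangle}{\sqrt{\eta_m(I_k)\,\eta_m(I_{k+1})}}
=\bigg(\frac{1}{\eta_m(I_k)}\int_{I_k}\dot h_m\,d\eta_m\bigg)\sqrt{\frac{\eta_m(I_k)}{\eta_m(I_{k+1})}},
\]
which is the block average times the square root of a mass ratio. Dually, $Q_n^*$ writes into $I_k$ the average over $I_{k+1}$ times $\sqrt{\eta_m(I_{k+1})/\eta_m(I_k)}$. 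Your convergence argument treats this as the plain block average, which is only true if consecutive blocks have equal $\eta_m$-mass. Your common partition guarantees only $\eta_m(I_k)\le 1/n$, and that is not enough.

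Take $f\equiv 1$ in the $m$-th component. Up to the $O(1/n)$ effect of the trimming,
\[
\|Q_n^*f-f\|^2\ge\sum_{k<K}\big(\sqrt{\eta_m(I_{k+1})}-\sqrt{\eta_m(I_k)}\big)^2 .
\]
For Lebesgue $\eta_m$ and block lengths alternating between $a$ and $3a$, this stays near $(\sqrt3-1)^2/2$ as $a\downarrow0$, although both of your requirements on the partition hold. Your remark that null blocks ``contribute nothing'' is also wrong. A charged block $I_k$ followed by a null block receives no term, so $Q_n^*f=0$ there and $\int_{I_k}|f_m|^2\,d\eta_m$ is lost. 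For the Cantor measure with the triadic partition $s_k=k3^{-j}$ (admissible once $2^{-j}\le 1/n$), every charged block is followed by a null one, so $Q_n$ vanishes identically on that component. Nothing in your construction enforces balanced consecutive masses. For several non-proportional $\eta_m$ sharing one partition it is unclear how you could arrange it. Normalizing by $\eta_m(I_k)^{-1}$ instead would give honest averages, but it destroys the contraction property whenever $\eta_m(I_{k+1})>\eta_m(I_k)$.

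The missing idea, which is what the paper uses, is to partition each component separately into blocks of equal $\eta_m$-mass. Since $\eta_m$ is atomless, there are points $0=t^n_{m,0}<\dots<t^n_{m,n}=1$ with $\eta_m((t^n_{m,k-1},t^n_{m,k}])=1/n$. Then $b^n_{m,k}=\sqrt n\,\1_{(t^n_{m,k-1},t^n_{m,k}]}$ is orthonormal, and $\sum_k\langle f,b^n_{m,k}\rangle b^n_{m,k+1}$ writes exactly the block average into the next block. Contraction is then automatic. Strong convergence of the adjoint follows by density: the paper uses indicators of intervals and gets error at most $5/n$, and your $g\circ F_m$ argument would also work once the averages are honest. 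The common partition is just the union of the finitely many points $t^n_{m,k}$, $m\le n$.

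The rest of your proposal carries over to this per-component construction with only trivial changes:
\begin{itemize}
\item the Cauchy--Schwarz contraction estimate;
\item the compactness and Schauder argument, which turns strong convergence of $Q_n^*$ into $\|\iota(Q_n-\on{id})\|\to0$;
\item your insertion-and-trimming bookkeeping, at relative cost $O(1/n)$.
\end{itemize}
The bookkeeping is in fact more careful than the paper's. It yields literally one pair per partition point, which the paper passes over when several $t^n_{m,k}$ coincide, for example for Brownian motion with $d\ge2$.
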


\begin{proof}
Let $\eta=(\eta_m)_{m\in M}$ with $M\subseteq\mathbb N$ be the family of atomless probability measures constructed in Proposition \ref{prop.spectral.mult}.\ Then, for every $m\in M$ and every
$n\in\mathbb N$, we may choose a partition
\[
  0=t_{m,0}^n<t_{m,1}^n<\cdots<t_{m,n}^n=1
\]
with
\[
  \eta_m\bigl((t_{m,k-1}^n,t_{m,k}^n]\bigr)=\frac1n
  \quad \text{for all } k=1,\ldots,n.
\]
%If \(m\notin M\), we ignore that component.
For $m\in M$, $n\in \mathbb N$, and $k=1,\ldots,n$, set
\[
  b_{m,k}^n
  :=
  \sqrt n\,\1_{(t_{m,k-1}^n,t_{m,k}^n]}
  \in L^2\big([0,1],\eta_m\big).
\]
Then,
\[
  \|b_{m,k}^n\|_{L^2([0, 1], \eta_m)}^2
  =
  n\,\eta_m\big((t_{m,k-1}^n,t_{m,k}^n]\big)
  =
  1\quad \text{for all }m\in M,\, n\in \mathbb N,\text{ and } k=1,\ldots,n,
\]
and therefore $b_{m,1}^n,\ldots,b_{m,n}^n$
is an orthonormal family in $L^2\big([0,1],\eta_m\big)$ for all $m\in M$.\

For each $m\in M$ and $n\in \mathbb N$, let
\[
  P_m^n\colon L^2\big([0,1],\eta_m\big)\to L^2\big([0,1],\eta_m\big),\quad f\mapsto \sum_{k=1}^{n-1}
  \langle f,b_{m,k}^n\rangle_{L^2([0,1],\eta_m)}\,b_{m,k+1}^n.
\]
Then, $P_m^n$ is a finite-rank contraction on $L^2\big([0,1],\eta_m\big)$ for all $m\in M$ and $n\in \mathbb N$.

Now, for $n\in \mathbb N$, we define the operator
\[
   Q_n\colon\M\to\M,\quad h\mapsto K\big((P_m^n\dot h_m^n)_{m\in M}\big)=\sum_{\substack{m\in M\\ m\leq n}} \sum_{k=1}^{n-1} \big\langle h,K_mb_{m,k}^n\big\rangle_\M K_m b_{m,k+1}^n,
\]
where, for $m\in M$, 
\[
  \dot h_m^n :=
  \begin{cases}
    \dot h_m, &  m\le n,\\
    0, & m>n.
  \end{cases}
\]
Since $K$ and $U$ are unitary and $P_m^n$  is a finite-rank contraction on $L^2\big([0,1],\eta_m\big)$ for all $m\in M$ and $n\in \mathbb N$, it follows that $Q_n$ is a finite-rank contraction on $\M$ for each $n\in \mathbb N$. 

Observe that, by \eqref{eq.property.cyclic},
\[
\pi_t K_m b^n_{m,k+1}=K_m \big(\1_{[0,t]}b^n_{m,k+1}\big)
\]
for all $t\in [0,1]$, $m\in M$, $n\in \mathbb N$, and $k=1,\ldots, n-1$, so that
\[
 \pi_t K_m b^n_{m,k+1}=0
\]
for all $0\leq t\leq t_{m,k}^n$, $m\in M$, $n\in \mathbb N$, and $k=1,\ldots, n-1$.\ Moreover, by construction $K_mb_{m,k}^n\in \pi_{t_{m,k}^n}$ for all $m\in M$, $n\in \mathbb N$, and $k=1,\ldots, n-1$.

Last but not least, we need to find one common partition for each $n\in \mathbb N$ that is independent of $m\in M$.\ For fixed $n\in \mathbb N$, only
the components $m\in M$ with $m\le n$ and finitely many $k=1,\ldots, n-1$ are used.\ Hence
we define a common partition by indexing all these points as $t_1^n,\ldots, t^n_{N_n}$ with $N_n\in \mathbb N$ such that
\[
  \{t_1^n,\ldots, t^n_{N_n}\}=
  \big\{t_{m,k}^n: m\in M \text{ with } m\le n\text{ and } k=1,\ldots,n-1\big\}\cup \{0,1\}.
\]

Next, we prove that $Q_n^*h\to h$ as $n\to \infty$ for all $h\in \M$.\ By definition of $(Q_n)_{n\in \mathbb N}$ and the norm on $\L$, it is sufficient to show that $P_m^{n,*}f\to f$ as $n\to \infty$ for all $m\in M$ and $f\in L^2([0,1],\eta_m)$, where
\[
 P_m^{n,*}\colon L^2\big([0,1],\eta_m\big)\to L^2\big([0,1],\eta_m\big),\quad f\mapsto \sum_{k=1}^{n-1}
  \langle f,b_{m,k+1}^n\rangle_{L^2([0,1],\eta_m)}\,b_{m,k}^n
\]
is the adjoint of $P_m^n$ for $m\in M$ and $n\in \mathbb N$.\ For $m\in M$ and $f=\1_{(a,b]}$ with $0\leq a< b\leq 1$,
\begin{align}
\notag \|P_m^{n,*} f-f\|_{L^2([0,1],\eta_m)}^2&=\eta_m\big((t_{m,n-1}^n,t_{m,n}^n]\cap (a,b]\big)\\
&\quad+\sum_{k=1}^{n-1} \int_{(t_{m,k-1}^n,t_{m,k}^n]} \Big|n\eta_m\big((t_{m,k}^n,t_{m,k+1}^n]\cap (a,b]\big)-\1_{(a,b]}(\theta)\Big|^2\, \eta_m(d\theta).\label{eq.estimate_L2norm}
\end{align}
We analyze the sum on the right-hand side in \eqref{eq.estimate_L2norm}.\ To that end, let $k_a,k_b\in \{1,\ldots, n\}$ with $$a\in (t_{m,k_a-1}^n,t_{m,k_a}^n]\quad\text{and}\quad b\in (t_{m,k_b-1}^n,t_{m,k_b}^n].$$
Then, $k_a\leq k_b$. Let $k\in \{1,\ldots, n-1\}$.\ If $k\leq k_a-2$ or $k\geq k_b+1$, it follows that 
\[
 (t_{m,k-1}^n,t_{m,k}^n]\cap (a,b]=\emptyset\quad \text{and}\quad (t_{m,k}^n,t_{m,k+1}^n]\cap (a,b]=\emptyset.
\]
On the other hand, if $k_a+1\leq k\leq k_b-2$, it follows that
\[
 (t_{m,k-1}^n,t_{m,k}^n]\cap (a,b]=(t_{m,k-1}^n,t_{m,k}^n]\quad \text{and}\quad (t_{m,k}^n,t_{m,k+1}^n]\cap (a,b]=(t_{m,k}^n,t_{m,k+1}^n].
\]
Therefore, the only summands which can be different from zero on the right-hand side in \eqref{eq.estimate_L2norm} are $k=k_a-1,k_a,k_b-1,k_b$, and $\eta_m\big((t_{m,n-1}^n,t_{m,n}^n]\cap (a,b]\big)$, which implies that
\[
\|P_m^{n,*} f-f\|_{L^2([0,1],\eta_m)}^2\leq \frac{5}n\to 0\quad \text{as }n\to \infty.
\]
Since the set $\big\{(a,b]: 0\leq a< b\leq 1\big\}$ is an intersection-stable generator of $\mathcal B((0,1])$ and $\eta_m\big([0,1]\big)=\eta_m\big((0,1]\big)$ due to the fact that $\eta_m$ is atomless for all $m\in M$, it follows that 
\[
{\rm span} \big\{\1_{(a,b]}: 0\leq a< b\leq 1\big\}
\]
is dense in $L^2\big([0,1],\eta_m\big)$ for all $m\in M$, see, e.g., \cite[Proposition A.1]{zbMATH08149100}.\ Since also $P_m^{n,*}$ is a linear contraction for all $m\in M$ and $n\in \mathbb N$, we thus obtain that $P_m^{n,*}f\to f$ as $n\to \infty$ for all $m\in M$ and $f\in L^2([0,1],\eta_m)$.\

Finally, by \cite[Corollary 8.3.10, p.\ 324]{zbMATH05851950}, the identity $J\colon \M\hookrightarrow \Omega$ is compact.\ By Schauder's theorem, this is equivalent to the compactness of $J^*\colon \Omega^*\hookrightarrow \M$, which implies that
\[
\|(Q_n^*-{\rm id}_\M)J^*\|_{L(\Omega^*,\M)}\to 0\quad \text{as }n\to \infty,
\]
and therefore $\|J(Q_n-{\rm id}_\M)\|_{L(\M,\Omega)}\to 0$ as $n\to \infty$.\ The proof is complete.
\end{proof}

Next, we prove an adapted projection approximation result, which relies in a crucial manner on Lemma~\ref{SA: partition}.

\begin{lemma}\label{lem:projection}
	There are Borel maps $P_n \colon \Omega \to \M$, $n\in\mathbb{N}$, with the following properties:
	\begin{enumerate}[label=\textup{(\roman*)}]
		\item For every $n\in \mathbb N$, $P_n$ is adapted in the sense that \(\pi_t P_n\) is \(\cF_t\)-measurable for every \(t \in [0, 1]\).
		\item For every \(n \in \mathbb{N}\), there exists a \(\P\)-null set \(N\in \cF\) such that, for all \(\eta \in \M\), \(P_n (\omega + \eta) = P_n (\omega) + Q_n(\eta)\) for all \(\omega \not \in N\), where \(Q_n \colon \M \to \M\) is a linear contraction such that, for every $R >0$,
		\begin{equation}\label{eq:projection-uniform}
			\sup_{\substack{u\in \M\\ \|u\|_{\M}\le R}}\|Q_n u -u\|_\infty\to0 \quad \text{as } n \to \infty.
		\end{equation}
		\item For every \(n \in \mathbb{N}\), \(\E [ \| P_n \|_{\mathcal{H}}^2 ] < \infty\). 
	\end{enumerate}
\end{lemma}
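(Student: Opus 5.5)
The natural candidate is obtained by replacing the inner products $\langle h, e^n_j\rangle_\M$ in the definition of $Q_n$ from Lemma~\ref{SA: partition} by the Paley--Wiener integrals $\mathcal I(e^n_j)$. Concretely, with the partition $0=t^n_1<\dots<t^n_{N_n}=1$ and vectors $e^n_j,u^n_j$ from Lemma~\ref{SA: partition}, we fix Borel versions $\xi^n_j\colon\Omega\to\bR$ of $\mathcal I(e^n_j)$ and set
\[
P_n(\omega):=\sum_{j=1}^{N_n}\xi^n_j(\omega)\,u^n_j .
\]
Since $\mathcal I$ is an isometry, $\E[(\xi^n_j)^2]=\|e^n_j\|_\M^2$, and $P_n$ is a finite sum, so (iii) is immediate from the triangle inequality in $L^2(\P;\M)$. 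Borel measurability holds because $P_n$ is a finite linear combination of fixed vectors with Borel coefficients.

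For the adaptedness (i), we fix $t\in[0,1]$ and note that $\pi_tP_n=\sum_j\xi^n_j\,\pi_tu^n_j$. By Lemma~\ref{SA: partition}(i), $\pi_tu^n_j=0$ whenever $t\le t^n_j$. Hence only the indices with $t^n_j<t$ contribute, and for those $e^n_j\in\pi_{t^n_j}\M\subseteq\pi_t\M$, so $e^n_j=\pi_t e^n_j$. Standing Assumption~\ref{SA: filr} then gives that $\mathcal I(e^n_j)=\mathcal I(\pi_te^n_j)$ is $\cF_t$-measurable. Because $\cF_t$ contains the null sets, the choice of Borel version does not matter, and $\pi_tP_n$ is $\cF_t$-measurable.

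The main obstacle is the shift identity (ii), because the null set must be uniform in $\eta\in\M$. I would choose the versions $\xi^n_j$ to be linear along $\M$ up to a single null set. Take $\gamma_k\in\Omega^*$ with $h_{\gamma_k}\to e^n_j$ in $\M$ (this is possible since $\{h_\gamma\}$ is dense in $\M$). Passing to a subsequence with $\sum_k\|h_{\gamma_k}-e^n_j\|_\M<\infty$, the continuous linear functionals $\langle\cdot,\gamma_k\rangle$ converge $\P$-a.s. Define $\xi^n_j:=\limsup_k\langle\cdot,\gamma_k\rangle$ on the set $A_j$ where the limit exists, and $0$ elsewhere. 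I then need that $A_j$ is invariant under shifts by $\M$ and that the limit shifts correctly. For $\eta\in\M$ we have $\langle\omega+\eta,\gamma_k\rangle=\langle\omega,\gamma_k\rangle+\langle\eta,h_{\gamma_k}\rangle_\M$, and the last term converges to $\langle\eta,e^n_j\rangle_\M$ for every $\eta$, by Cauchy--Schwarz. So on $N^c$ with $N:=\bigcup_j A_j^c$ we get $\omega+\eta\in A_j$ and
\[
\xi^n_j(\omega+\eta)=\xi^n_j(\omega)+\langle\eta,e^n_j\rangle_\M .
\]
Summing over $j$ yields $P_n(\omega+\eta)=P_n(\omega)+Q_n\eta$ for all $\omega\notin N$ and all $\eta\in\M$. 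Finally, $Q_n$ is exactly the contraction of Lemma~\ref{SA: partition}(ii), which supplies \eqref{eq:projection-uniform}.
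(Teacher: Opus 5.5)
Your proposal is correct and is essentially the paper's proof. The paper likewise approximates each $e^n_j$ by vectors $h_{\gamma_k}$ with $\gamma_k\in\Omega^*$ and takes as version of $\mathcal{I}(e^n_j)$ the pointwise limit of ${}_\Omega\langle\cdot,\gamma_k\rangle_{\Omega^*}$ on the Borel set where it converges (and $0$ elsewhere). It then uses that this set is invariant under Cameron--Martin shifts, with the limit shifting by $\langle\eta,e^n_j\rangle$, and sets $P_n=\sum_j\xi^n_j u^n_j$. Your adaptedness argument (dropping indices with $t\le t^n_j$ and using $\pi_t e^n_j=e^n_j$ otherwise) spells out what the paper compresses into a reference to Standing Assumption~\ref{SA: filr} and Lemma~\ref{SA: partition}(i).
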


\begin{proof}
We begin with a preparatory step.\ Throughout, fix $n \in \mathbb{N}$, let \(e^n_1, \dots, e^n_{N_n}\) as in Lemma~\ref{SA: partition}, and set $E := \big\{ e^n_k : 1 \leq k \leq N_n \big\} \subset \M$. Recall from \cite[Lemma~8.2.2~(i)]{stroock_APV_V3} that $\{ h_{x^*} : x^* \in \Omega^* \}$ is dense in $\M$. Hence, for every \(e \in E\), there exists a sequence $(\omega^*_{m, e})_{m \in \mathbb{N}} \subset \Omega^*$ such that $h_{x^*_{m, e}} \to e$ as $m \to \infty$.\ Thanks to \cite[Theorem~8.2.6]{stroock_APV_V3}, for all $m \in \mathbb{N}$ and $e \in E$, $\mathcal{I} (h_{x^*_{m, e}}) = \, _{\Omega} \langle \, \cdot \, , x^*_{m, e} \rangle_{\Omega^*}$ and $h \mapsto \mathcal{I} (h)$ is continuous from $\M$ to $L^2 (\Omega, \P)$. Hence, after passing to a subsequence if necessary, we may assume that $\P$-a.s. $_{\Omega} \langle \, \cdot \, , x^*_{m, e} \rangle_{\Omega^*} = \mathcal{I} (h_{x^*_{m, e}}) \to \mathcal{I} (e)$.
For $e \in E$, set
\[
\Omega_e := \left \{ \omega \in \Omega :  \, _{\Omega} \langle \omega, x^*_{m, e} \rangle_{\Omega^*} \text{ converges as \(m \to \infty\)} \right \}. 
\]
By \cite[Lemma~1.11]{kallenberg}, the set \(\Omega_e\) is Borel and, as \(\P\)-a.s.\ $_{\Omega} \langle \, \cdot \, ,x^*_{m, e} \rangle_{\Omega^*} = \mathcal{I} (h_{x^*_{m, e}}) \to \mathcal{I} (e)$, also $\P$-full. Now, define $\mathcal{L} \colon E \to L^2 (\Omega, \P)$ by 
\[
\mathcal{L} (e) (\omega) := \begin{cases} \displaystyle\lim_{m \to \infty} \, _{\Omega} \langle \omega, x^*_{m, e} \rangle_{\Omega^*}, & \omega \in \Omega_e, \\ 0, & \omega \not \in \Omega_e, \end{cases} 
\]
where we notice that, for $e \in E$, $\P$-a.s. 
\[
\mathcal{L} (e) = \lim_{m \to \infty} \, _{\Omega} \langle \, \cdot \,, x^*_{m, e} \rangle_{\Omega^*} = \lim_{m \to \infty} \mathcal{I} (h_{x^*_{m, e}}) = \mathcal{I} (e).
\]
In other words, \(\mathcal{L} (e)\) is a $\P$-version of $\mathcal{I} (e)$. After this preparatory step, we are in the position to start the main proof of (i)-(iii).

\smallskip 
(i) We define 
\[
P_n (\omega) := \sum_{j = 1}^{N_n} \mathcal{L} (e^n_j) (\omega) \, u^n_j \quad \text{for }n\in \mathbb N \text{ and }\omega \in \Omega. 
\] 
Recalling that $\P$-a.s. $\mathcal{L} (e) = \mathcal{I} (e)$, we have $\P$-a.s.
\[
\pi_t P_n = \sum_{j = 1}^{N_n}  \mathcal{I} (e^n_j) \, \pi_tu^n_j, 
\]
where the right hand side is $\cF_t$-measurable by Standing Assumption~\ref{SA: filr} and Lemma~\ref{SA: partition}~(i).\ As $\mathcal{F}_t$ is $\P$-complete, this implies that \(\pi_t P_n\) is $\cF_t$-measurable as well, see \cite[Lemma~1.27]{kallenberg}.\ 

\smallskip 
(ii) Notice that, for every $\omega \in \Omega_e$ and $h \in \M$, 
\[
\lim_{m \to \infty} \, _{\Omega} \langle \omega + h, x^*_{m, e} \rangle_{\Omega^*} \text{ exists and equals } \mathcal{L} (e) (\omega) + \langle h, e \rangle_\M.
\] 
Consequently, $\omega + h \in \Omega_e$ and 
\[
\mathcal{L} (e) (\omega + h) = \mathcal{L} (e) (\omega) + \langle h, e \rangle_\M.
\] 
Setting $F := \bigcap_{e \in E} \Omega_e$, we get, for all $\omega \in F$ and $h \in \M$, 
\[
P_n (\omega + h) = P_n (\omega) + Q_n (h), 
\] 
where $Q_n$ is as in Lemma~\ref{SA: partition}. As $F$ is $\P$-full, this entails the claim of (ii). 

\smallskip
(iii) Part (iii) follows from the fact that, for every $e \in E$, $\P$-a.s. $\mathcal{L} (e) = \mathcal{I}(e)$, together with the fact that each $\mathcal{I} (e)$ is a Gaussian random variable.
\end{proof}

\subsection{Bou\'e--Dupuis formula}\label{sec:boue.dupuis}

For the proof of the inequalities in \eqref{eq: main}, we need the following version of the Bou\'e--Dupuis formula, which is a consequence of the main result in \cite{Z_09} that extends the formula from the standard Brownian motion to abstract Wiener spaces.\ Let $\mathbb{H}$ be the set of all $\M$-valued random variables $h$ on $(\Omega, \cF, \P)$ such that, for every $t \in [0, 1]$, $\pi_t h$ is $\cF_t$-measurable. 

\begin{proposition} \label{prop: BD formula}
	For all $\varepsilon > 0$, 
\begin{align*}
 V^\varepsilon &= \inf_{a \in \A} \sup_{h \in \mathbb{H}} \E \bigg[ \varphi \Big( \G (a (\sqrt{\varepsilon} \, \X + h), \sqrt{\varepsilon} \, \X + h) \Big) - \frac{1}{2} \| h \|^2_{\M}\bigg]
 \\
 &= \inf_{a \in \A} \sup_{h \in \mathbb{H}} \E \bigg[ \varphi \Big( \G (a (\X + h / \sqrt{\varepsilon} ),  \sqrt{\varepsilon} \, \X + h) \Big) - \frac{1}{2} \| h \|^2_{\M}\bigg].  
  %  \log \E \big[ e^{\psi (K \, *\, \X)} \big] = \sup_{h \in \mathbb{H} } \E \Big[ \psi (K * \X + h) - \frac{1}{2} \| h \|_{\M} \Big]. 
\end{align*}
\end{proposition}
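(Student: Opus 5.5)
Fix $\varepsilon>0$ and $a\in\A$, apply Zhang's Boué--Dupuis formula \cite{Z_09} to the functional $F:=\varphi(\G(a,\sqrt{\varepsilon}\X))/\varepsilon$, and then rescale the drift. Zhang's result states that, for a bounded measurable $F\colon\Omega\to\bR$ on the abstract Wiener space $(\Omega,\M,\P)$ with the filtration generated by $\{\mathcal I(\pi_t h)\}$ (which equals $(\cF_t)$ by Standing Assumption~\ref{SA: filr}),
\[
\log\E\big[e^{F}\big]=\sup_{k\in\mathbb H}\E\Big[F(\X+k)-\tfrac12\|k\|_\M^2\Big].
\]
Two points need checking before invoking it.

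\emph{Measurability.} Write $F=\Phi\circ\X$ with $\Phi(\omega):=\varphi(\G(a(\omega),\sqrt{\varepsilon}\,\omega))/\varepsilon$. This is a bounded Borel functional of $\omega$, because $a$ is a progressively measurable functional of the path (defined up to null sets, choosing a Borel version) and $\G$ is jointly continuous by Lemma~\ref{lem: sol map}.

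\emph{Meaning of the shift.} $F(\X+k)$ must be read as $\Phi(\X+k)$, that is, the control is also evaluated along the shifted path. This is why $a(\X+h/\sqrt{\varepsilon})$ appears in the second line of the statement. To make this legitimate, I use that $k$ is $\M$-valued, so the law of $\X+k$ is absolutely continuous with respect to $\P$ whenever $\E[\|k\|^2_\M]<\infty$ (Girsanov/Cameron--Martin). Hence $\Phi(\X+k)$ does not depend on the chosen version of $a$. If Zhang's theorem is stated for a smaller class of controls (bounded, or simple), I extend to all of $\mathbb H$ by the standard truncation argument: since $\varphi$ is bounded, controls with $\E\|k\|^2_\M=\infty$ contribute $-\infty$ and can be dropped.

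Multiplying the representation by $\varepsilon$ and substituting $h:=\sqrt{\varepsilon}\,k$, which is a bijection of $\mathbb H$ onto itself because adaptedness of $\pi_t h$ is scale invariant, gives
\[
\varepsilon\log\E\big[e^{F}\big]=\sup_{h\in\mathbb H}\E\Big[\varphi\big(\G(a(\X+h/\sqrt{\varepsilon}),\sqrt{\varepsilon}\,\X+h)\big)-\tfrac12\|h\|_\M^2\Big].
\]
Taking the infimum over $a\in\A$ yields the second identity.

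For the first identity, I reparametrize the control. Put $\tilde a(\omega):=a(\omega/\sqrt{\varepsilon})$. The filtration generated by $\sqrt{\varepsilon}\,\X$ coincides with that of $\X$, and the null sets agree under scaling (for progressive measurability, equivalence of $\P$ and its scaled image is not needed, only Borel progressive functionals composed with $\X$). So $a\mapsto\tilde a$ is a bijection of $\A$, viewed as progressive path functionals, and $a(\X+h/\sqrt{\varepsilon})=\tilde a(\sqrt{\varepsilon}\,\X+h)$. Rewriting the second expression in terms of $\tilde a$ and taking the infimum over $\tilde a\in\A$ gives the first.

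The main obstacle is the careful interpretation of the control as a path functional. This requires a Borel progressive version $a\colon[0,1]\times\Omega\to\Lambda$, and one must verify that evaluating it at shifted or scaled paths is well defined almost surely. For scaled paths, note that $\P$ and its image under $\omega\mapsto\sqrt{\varepsilon}\,\omega$ are typically mutually singular. I therefore define $\A$ pointwise via Borel progressive functionals of the canonical path. For shifted paths, the Cameron--Martin absolute continuity above handles the issue. I would state this convention explicitly and then check that $\inf_{a\in\A}$ is unchanged.
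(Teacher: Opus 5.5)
Your proposal is correct and uses the same ingredients as the paper: Zhang's Bou\'e--Dupuis formula, with Standing Assumption~\ref{SA: filr} identifying the filtration, combined with the reparametrization $a(\cdot)\leftrightarrow a(\cdot/\sqrt{\varepsilon})$ of the control. The only difference in order is that you obtain the second line directly via $h=\sqrt{\varepsilon}\,k$ and then deduce the first, whereas the paper states them the other way round; your care about versions of $a$ goes beyond the paper, which passes over it, and your Borel-functional convention for $\A$ is the right fix (though drop the parenthetical claim that null sets agree under scaling, since you later correctly note that $\P$ and its scaled image are typically mutually singular).
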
 
\begin{proof} 
	The first equation follows directly from Standing Assumption~\ref{SA: filr} and \cite[Theorem~3.2]{Z_09}.\ The former is needed in order to ensure that the control strategies are adapted to $(\cF_t)_{t \in [0, 1]}$.\ The second equation follows from a simple reparametrization, namely exchanging $a (\, \cdot \,)$ with $a (\, \cdot \, / \sqrt{\varepsilon})$, which does not affect the optimization.
\end{proof}

We are now in the position to present our proofs for \eqref{eq: main}, given in the following two subsections.

\subsection{Proof of the first inequality in \eqref{eq: main}}\label{sec:first.ineq}

We start with the following approximation result.\ To ease its formulation, we call a strategy $\alpha \in \mathscr{A}$ {\em simple}, if 
\begin{align*}
\pi_t \alpha [ v ] = \sum_{k = 0}^{N-1} g_k [v] \pi_t u^k \1_{\{t_k < t\}}, \quad\text{for all } v \in \N,
\end{align*} 
with $N\in \mathbb N$, \(0 = t_0 < t_1 < \dots < t_N = 1, u^k \in \M\), and 
\begin{align} \label{eq: nonanti gk}
v = v' \text{ a.e. on \([0, t_k]\)} \implies g_k [v] = g_k [v'].
\end{align}

\begin{lemma} \label{lem: piecewise constant approximation 1}
Let $\alpha\in \mathscr{A}$ be bounded, $\delta > 0$, and assume that the cost function $\varphi$ is uniformly continuous.\ Then, there exists a simple $\alpha' \in \mathscr{A}$ such that 
\[
C (v, \alpha[v]) \leq C (v, \alpha' [v]) + \delta 
\]
for all $v \in \N$.
\end{lemma}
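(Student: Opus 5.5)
The plan is to take $\alpha' := Q_n \circ \alpha$, where $Q_n$ is the delayed finite-rank contraction from Lemma~\ref{SA: partition} and $n$ is chosen large. Let $R$ be a bound with $\|\alpha[v]\|_\M \le R$ for all $v \in \N$. Fix $n$, the partition $0 = t^n_1 < \dots < t^n_{N_n} = 1$ and the vectors $e^n_j, u^n_j$ from Lemma~\ref{SA: partition}, and set
\[
\alpha'[v] := Q_n \alpha[v] = \sum_{j=1}^{N_n} g_j[v]\, u^n_j, \qquad g_j[v] := \langle \alpha[v], e^n_j\rangle_\M .
\]

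First I will check that $\alpha'$ is a simple element of $\mathscr{A}$. Measurability is clear, since $\alpha$ is measurable and $\alpha'$ is $\alpha$ composed with a bounded linear map. Since $e^n_j \in \pi_{t^n_j}\M$ and $\pi_{t^n_j}$ is self-adjoint,
\[
g_j[v] = \langle \pi_{t^n_j}\alpha[v], e^n_j\rangle_\M .
\]
By the non-anticipation property of $\alpha$, this depends only on $v$ a.e.\ on $[0,t^n_j]$, which is exactly \eqref{eq: nonanti gk}. Moreover, $\pi_t u^n_j = 0$ for $t \le t^n_j$, so $\pi_t u^n_j = \pi_t u^n_j \1_{\{t^n_j < t\}}$. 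Hence
\[
\pi_t \alpha'[v] = \sum_j g_j[v]\, \pi_t u^n_j\, \1_{\{t^n_j < t\}},
\]
which is the simple form after relabelling indices from $0$; the index with $t^n_j = 1$ contributes nothing. From this formula, $\pi_t\alpha'[v]$ only involves $g_j$ with $t^n_j < t$. Each such $g_j$ depends on $v$ only on $[0,t^n_j] \subset [0,t]$, so the non-anticipation property for $\alpha'$ holds.

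Next comes the cost comparison, which is the heart of the argument and should be straightforward. Because $Q_n$ is a contraction, $\|\alpha'[v]\|_\M \le \|\alpha[v]\|_\M$, so the penalty term can only improve:
\[
-\tfrac12\|\alpha[v]\|_\M^2 \le -\tfrac12\|\alpha'[v]\|_\M^2 .
\]
For the $\varphi$-term, let $m_\varphi$ be a modulus of uniform continuity of $\varphi$. The Gronwall bound \eqref{eq: Gronwall bound} in Lemma~\ref{lem: sol map} gives, uniformly in $v$,
\[
\varphi\big(\G(v,\alpha[v])\big) \le \varphi\big(\G(v,\alpha'[v])\big) + m_\varphi\Big(e^L \sup_{\|h\|_\M \le R}\|Q_n h - h\|_\infty\Big).
\]
By Lemma~\ref{SA: partition}~(ii), the supremum tends to $0$ as $n \to \infty$, so for $n$ large the last term is at most $\delta$. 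Adding the two estimates gives $C(v,\alpha[v]) \le C(v,\alpha'[v]) + \delta$ for all $v$.

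I expect the main care to be needed in the bookkeeping of the non-anticipation: matching the strict inequality $t^n_j < t$ with the hypothesis ``$\pi_t u^n_j = 0$ for $t \le t^n_j$'', and handling the endpoints $t^n_j \in \{0,1\}$. Boundedness of $\alpha$ is used only to make the approximation error uniform in $v$.
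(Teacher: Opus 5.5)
Your proposal is correct and follows essentially the same route as the paper: take $\alpha' := Q_n\circ\alpha$ with $Q_n$ from Lemma~\ref{SA: partition}, and read off the simple form from $e^n_j\in\pi_{t^n_j}\M$ and $\pi_t u^n_j=0$ for $t\le t^n_j$. Then use the contraction property for the penalty term, and the Gronwall bound together with the uniform convergence of $Q_n$ on the ball of radius $R=\sup_v\|\alpha[v]\|_\M$ for the $\varphi$-term; your explicit check of the non-anticipation of $\alpha'$ and of the endpoint indices is just slightly more detailed than the paper's one-line remark that the displayed formula shows $\alpha'$ is simple.
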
 
\begin{proof}
 Let $Q_n$ be as in Lemma~\ref{SA: partition} and set $\alpha_n [ v ] := Q_n (\alpha [ v ])$ for $n\in \mathbb N$.\ 
Using the notation from Lemma~\ref{SA: partition}, we observe that 
\begin{align*}
\pi_t \alpha_n [ v ] = \sum_{j = 1}^{N_n} \langle \alpha [ v ], e^n_j \rangle_{\M} \, \pi_t u^n_j &= \sum_{j = 1}^{N_n} \langle \pi_{t^n_j} \alpha [ v ], e^n_j \rangle_{\M}\, \pi_t u^n_j \1_{\{t^n_j < t\}},
\end{align*}
which shows that $\alpha_n$ is simple. 
Furthermore, since $Q_n$ is a contraction, we have $\| \alpha_n [ v ] \|_\M \leq \| \alpha [ v ] \|_\M$. 
 Denoting the modulus of continuity of $\varphi$ by $w_\varphi$, we obtain from Lemma~\ref{lem: sol map}~(c) and part (ii) of Lemma~\ref{SA: partition} that 
	\begin{align*}
		C (v, \alpha [ v ]) &\leq \varphi \big(\G (v, \alpha [ v ])\big) - \frac{1}{2} \| \alpha_n [ v ] \|^2_\M 
		\\&\leq C (v, \alpha_n [v ]) + w_\varphi \big(\| \G (v, \alpha [ v ]) - \G (v, \alpha_n [ v ]) \|_\infty\big) 
		\\&\leq C (v, \alpha_n [v]) + w_\varphi \big(e^L \| \alpha [ v ] - \alpha_n [ v ] \|_\infty \big).
	\end{align*}
	Using that, by Lemma~\ref{SA: partition},
	\[
	\sup_{v \in \N} \| \alpha_n [ v ] - \alpha [ v ] \|_\infty \leq \sup_{ \substack{u \, \in \, \M \\ \| u \|_{\M} \leq R }} \| Q_n u - u \|_\infty \to 0, \quad n \to \infty,
	\] 
	 with $R:=\sup_{v\in \N}\|\alpha[v]\|_\M$, we may take \(n\) large enough such that 
	\[
	C (v, \alpha [ v ]) \leq C (v, \alpha_n [v]) + \delta. \qedhere
	\] 
\end{proof}

For $\delta > 0$, take $\alpha \in \mathscr{A}$ such that 
\begin{align*}
    \lv \leq \inf_{v \in \N} C (v, \alpha [v]) + \delta.
\end{align*}
Since $\varphi$ is bounded and by definition of the cost $C$,
\[
-\|\varphi\|_\infty\leq \lv\leq C (v, \alpha [v]) + \delta\leq \|\varphi\|_\infty+\delta-\frac12\|\alpha[v]\|^2 \quad \text{for all }v\in \N, 
\]
implying
\[
\frac12\|\alpha[v]\|^2\leq 2\|\varphi\|_\infty+\delta\quad \text{for all }v\in \N,
\]
 so that $\alpha$ is bounded by $\sqrt{4\|\varphi\|_\infty+2\delta}$.\ By virtue of Lemma~\ref{lem: piecewise constant approximation 1}, we may therefore assume that $\alpha$ is simple, i.e., 
\begin{align} \label{eq: alpha step}
\pi_t \alpha [ v ] = \sum_{k = 0}^{N-1} g_k [v] \pi_t u^k \1_{\{t_k < t\}}, \quad \text{for all }v \in \N,
\end{align} 
with $N\in \mathbb N$, $0 = t_0 < t_1 < \dots < t_N = 1$, $u^k\in \M$, and \(g_k[ v]\in \bR\) satisfying \eqref{eq: nonanti gk}.\ Now, take a strategy $a \in \A$, and define $h^{a} \in \mathbb{H}$ in the following way: On $[t_0, t_1]$, set 
\[
h^a := \alpha [\lambda_0] \quad\text{and}\quad v^a := a \big(\sqrt{\varepsilon} \, \X + h^a\big),
\] 
where $\lambda_0$ is an arbitrary constant strategy from $\N$.
Inductively, on $(t_n, t_{n + 1}]$ for $n=1,\ldots, N-1$, set 
\[
h^a := \alpha \big[ v^a_n\big] \quad\text{and}\quad v^a := a \big(\sqrt{\varepsilon} \, \X + h^a\big), 
\] 
where $v^a_n = v^a$ on $[0, t_n]$ and $\lambda_0$ otherwise.
Notice that $v^a$ takes values in $\N$.\ Moreover, by Galmarino's test for the filtration $(\cF_t)_{t \in [0, 1]}$ as given by \cite[Lemma~2.5]{NVH}, it is readily seen that $\pi_t h^a$ is $\cF_t$-measurable for every $t \in [0, 1]$. More precisely, for every $t \in [0, 1]$ and \(\omega, \omega' \in \Omega\), the $(\cF_t)_{t \in [0, 1]}$-adaptedness of $a$ shows that $v^a (\omega) = v^a (\omega')$ on $[0, t]$ and hence, by \eqref{eq: nonanti gk}, also $\pi_t h^a(\omega) = \pi_t h^a (\omega')$, whenever $\omega = \omega'$ on $[0, t]$. As $\pi_t h^a$ is clearly $\cF \vee \mathcal{N}$-measurable, this proves that $\pi_t h^a$ is $\cF_t$-measurable.
Let $w_\varphi$ be a bounded modulus of continuity for the bounded and uniformly continuous cost function $\varphi$.\
We deduce from Proposition~\ref{prop: BD formula} and Lemma~\ref{lem: sol map}~(c) that 
\begin{align*}
	V^\varepsilon &\geq \inf_{a \in \mathbb{A}}  \E \bigg[ \varphi \Big( \G \big(a \big(\sqrt{\varepsilon} \, \X + h^a\big), \sqrt{\varepsilon} \, \X + h^a\big) \Big) - \frac{1}{2} \| h^a \|^2_{\M}\bigg]
	\\ &=  \inf_{a \in \mathbb{A}}  \E \bigg[ \varphi \Big( \G \big(v^a, \sqrt{\varepsilon} \, \X +  \alpha [ v^a ]\big) \Big) - \frac{1}{2} \big\| \alpha [v^a] \big\|^2_{\M}\bigg]
	\\&\geq \inf_{a \in \mathbb{A}} \E \bigg[ \varphi \Big( \G \big(v^a, \alpha [ v^a ]\big) \Big) - \frac{1}{2} \big\| \alpha [v^a] \big\|^2_{\M}\bigg] \\&\hspace{3cm}- \sup_{a \in \mathbb{A}} \E \Big[ w_\varphi \Big( \| \G \big(v^a, \sqrt{\varepsilon} \, \X + \alpha [ v^a]\big) - \G \big( v^a, \alpha [ v^a ] \big) \|_\infty \Big) \Big]
	\\&\geq \inf_{v \in \N} C (v, \alpha [v]) - \E \big[ w_\varphi \big( \sqrt{\varepsilon} e^L \| \X \|_\infty \big) \big]
	\\&\geq \lv - \delta - \E \big[ w_\varphi \big( \sqrt{\varepsilon} e^L \| \X \|_\infty \big) \big].
\end{align*}
Taking \(\delta \downarrow 0\) shows that 
\[
V^\varepsilon \geq \lv - \E \big[ w_\varphi \big( \sqrt{\varepsilon} e^L \| \X \|_\infty \big) \big].
\] 
As the final term converges to zero as \(\varepsilon \downarrow 0\), by the dominated convergence theorem, we conclude that the first inequality in \eqref{eq: main} holds. \qed

\subsection{Proof of the last inequality in \eqref{eq: main}}\label{sec:last.ineq}
As a preparatory step, we first define a Borel map $Z \colon [0, 1] \times \N \to \Lambda$ such that $Z (t, v) = v_t$ for a.e.\ $t \in [0, 1]$, and $v = v'$ a.e.\ on $[0, t]$ implies that \(Z (s, v) = Z (s, v')\) for all $s \in [0, t]$. Since $\Lambda$ is assumed to be Polish, it is Borel isomorphic to a Borel subset of $[0, 1]$, i.e., there exists a Borel set $G \subset [0, 1]$ and a Borel bijection $A \colon \Lambda \to G$ with Borel inverse $A^{-1} \colon G \to \Lambda$, see \cite[Theorem~1.8]{kallenberg}. For $n \in \mathbb{N}$, define $S_n \colon [0, 1] \times \N \to \bR$ by 
\[
S_n (t, v) := n \, \int_{(t - 1/n)^+}^t A (v_s) \, ds \quad \text{for $t \in [0, 1]$ and \(v \in \N\).}
\]
We notice that the map $S_n$ is Borel.\ To show this, define $v \mapsto \nu_{v}$ by
\[
\N \to \mathcal{P} ([0, 1] \times \Lambda), \qquad \int_{[0, 1] \time \Lambda} g (s, \lambda) \, \nu_{v} (ds, d \lambda) := \int_0^1 g (s, v_s) \, ds \quad \text{for all } g \in C_b ([0, 1] \times \Lambda), 
\]
where $\mathcal{P} ([0, 1] \times \Lambda)$ denotes the set of all Borel probability measures on $[0, 1] \times \Lambda$. The map $v \mapsto \nu_v$ is continuous by the dominated convergence theorem and hence, $S_n$ is Borel by \cite[Theorem~8.10.61]{bogachev}. 
Define
\[
F := \left \{ (t, v) \in [0, 1] \times \N : \lim_{n \to \infty} S_n (t, v) \text{ exists} \right\}, 
\]
which is a Borel set by \cite[Lemma~1.11]{kallenberg}. Now, set 
\[
S (t, v) := \begin{cases} \displaystyle\lim_{n\to\infty} S_n(t, v), & (t, v) \in F, \\ \ell, & \text{otherwise}, \end{cases}
\]
where $\ell \in G$ is arbitrary, and finally, for $t \in [0, 1]$ and $v \in \N$, 
\[
Z (t, v) := \begin{cases} A^{-1} (S (t, v)), & S (t, v) \in G, \\ A^{-1} (\ell), &\text{otherwise}. \end{cases} 
\]
The map $Z$ is Borel and $Z (t, v) = v_t$ for a.e.\ $t \in [0, 1]$. Furthermore, as $v = v'$ a.e.\ on $[0, t]$ implies that $S_n (s, v) = S_n (s, v')$ for all \(s \in [0, t]\), it also holds that $Z (s, v) = Z (s, v')$ for all $s \in [0, t]$.

We are now in a position to prove the last inequality in \eqref{eq: main}.
Fix $\delta > 0$ and take $\beta \in \mathscr{B}$ such that 
\[
\sup_{u \in \M} C (\beta[u], u) \leq \uv + \delta. 
\] 
With $P_n$ from Lemma~\ref{lem:projection}, set 
\[
a^n_t (\omega) := Z \big(t, \beta [\sqrt{\varepsilon} \, P_n (\omega)] \big), \quad \text{for \(t \in [0, 1]\) and \(\omega \in \Omega\)}.
\]
We now explain that $a^n$ is $(\cF_t)_{t \in [0, 1]}$-progressively measurable. Take $t \in [0, 1]$. By the non-anticipation property of $\beta$, we observe that $\beta [\sqrt{\varepsilon} \, P_n (\omega)] = \beta [\sqrt{\varepsilon} \, \pi_t P_n (\omega)]$ a.e.\ on $[0, t]$. Thus, by the properties of $Z$, we also get that $a^n_t = Z \big(t, \beta [\sqrt{\varepsilon} \, \pi_t P_n] \big)$.\ 
Now, recalling Lemma~\ref{lem:projection}~(i), as $(t, \omega) \mapsto \pi_t P_n (\omega)$ is an $(\cF_t)_{t \in [0, 1]}$-adapted process with continuous paths, it is $(\cF_t)_{t \in [0, 1]}$-progressively measurable. Thus, as \(Z\) is Borel, it follows that $a^n$ is $(\cF_t)_{t \in [0, 1]}$-progressively measurable as well. In summary, $a^n \in \mathbb{A}$.\ We deduce from Proposition~\ref{prop: BD formula} that 
\begin{align*}
V^\varepsilon &\leq \sup_{h \in \mathbb{H}} \E \bigg[ \varphi \Big( \G \big(a^n \big( \X + h / \sqrt{\varepsilon}\big), \sqrt{\varepsilon} \, \X + h\big) \Big) - \frac{1}{2} \| h \|^2_{\M} \bigg].
\end{align*}
Let \(\mathbb{H}_R := \big\{ h \in \mathbb{H} : \E [ \| h \|^2_{\M} ] \leq R \big\}\) with \(R := 4 \| \varphi \|_\infty\).\ We observe that 
\begin{align*}
\sup_{h \in \mathbb{H}} \E \bigg[ \varphi \Big( \G \big(a^n \big( \X &+ h / \sqrt{\varepsilon}\big), \sqrt{\varepsilon} \, \X + h\big) \Big) - \frac{1}{2} \| h \|^2_{\M} \bigg] 
\\&= \sup_{h \in \mathbb{H}_R} \E \bigg[ \varphi \Big( \G \big(a^n \big( \X + h / \sqrt{\varepsilon}\big), \sqrt{\varepsilon} \, \X + h\big) \Big) - \frac{1}{2} \| h \|^2_{\M} \bigg].
\end{align*}
Indeed, if \(h \in \mathbb{H}\) is such that \(\E [ \| h \|^2_{\M} ] > R\), then 
\[
\E \bigg[ \varphi \Big( \G \big(a^n \big( \X + h / \sqrt{\varepsilon}\big), \sqrt{\varepsilon} \, \X + h\big) \Big) - \frac{1}{2} \| h \|^2_{\M} \bigg] < \| \varphi \|_\infty - \frac{R}2 = - \| \varphi \|_\infty, 
\] 
but, with \(h \equiv 0\), 
\[
\E\Big[ \varphi \Big( \G \big(a^n ( \X) , \sqrt{\varepsilon} \, \X \big)\Big) \Big] \geq - \| \varphi\|_\infty, 
\] 
which shows that all \(h \not \in \mathbb{H}_R\) are ignored by the optimization.\ 
Let \(w_\varphi\) be a bounded concave modulus of continuity for the bounded and uniformly continuous cost function \(\varphi\).\ By Lemma~\ref{lem:projection}~(ii), there exists a \(\P\)-null set \(N\) such that, for all \(\omega \not \in N\), Lebesgue a.e.  
\begin{align*}
    a^n \left( \omega + \frac{h (\omega)}{\sqrt{\varepsilon}} \right) = \beta \left [ \sqrt{\varepsilon} \, P_n\left( \omega + \frac{h (\omega)}{\sqrt{\varepsilon}} \right)\right] = \beta \left[ \sqrt{\varepsilon}\, P_n (\omega) + Q_n (h (\omega)) \right].
\end{align*}
By Lemma~\ref{lem: sol map}~(c), this yields that 
\begin{align*}
V^\varepsilon &\leq  
\sup_{h \in \mathbb{H}_R} \E \bigg[ \varphi \Big( \G \big(\beta \big[\sqrt{\varepsilon} P_n  + Q_n (h) \big] , \sqrt{\varepsilon} \, \X + h\big) \Big) - \frac{1}{2} \| h \|^2_{\M} \bigg]  
\\ &\leq \sup_{h \in \mathbb{H}_R} \E \bigg[ \varphi \Big( \G \big(\beta \big[\sqrt{\varepsilon} P_n  + Q_n (h) \big] , \sqrt{\varepsilon} P_n + Q_n (h)\big) \Big) - \frac{1}{2} \| h \|^2_{\M} \bigg] \\&\hspace{2.875cm} + \sup_{h \in \mathbb{H}_R} \E \Big[ w_\varphi \big( e^L (\sqrt{\varepsilon} \| \X - P_n \|_\infty + \|h - Q_n (h)\|_\infty \big) \Big]
\\&\leq \sup_{u \in \M} C ( \beta [ u ], u ) + \sup_{h \in \mathbb{H}_R} \E \Big[ w_\varphi \big( e^L (\sqrt{\varepsilon} \| \X - P_n \|_\infty + \|h - Q_n (h)\|_\infty \big) \Big]
\\&\hspace{2.875cm} + \frac{1}{2} \, \sup_{h \in \mathbb{H}_R} \E\Big[ \big\|\sqrt{\varepsilon} P_n + Q_n (h) \big\|^2_{\M} - \| h \|^2_{\M} \Big]
\\&\leq \uv + \delta + \E \Big[ w_\varphi \big( e^L \sqrt{\varepsilon} \| \X - P_n  \|_\infty \big) \Big] + \sup_{h \in \mathbb{H}_R} \E\big[ w_\varphi (e^L \|h - Q_n (h) \|_\infty )\big]
\\&\hspace{2.875cm} + \frac{1}{2} \, \sup_{h \in \mathbb{H}_R} \E\Big[ \big\|\sqrt{\varepsilon} P_n + Q_n (h) \big\|^2_{\M} - \| h \|^2_{\M} \Big]. 
\end{align*} 
Recall from Lemma~\ref{SA: partition} that \(Q_n \colon \M \to \M\) is a contraction.\ Using this fact we obtain that 
\begin{align*}
	\|\sqrt{\varepsilon} P_n  + Q_n (h) \|^2_{\M} - \| h \|^2_{\M} &\leq \varepsilon \| P_n \|^2 + 2\sqrt{\varepsilon} \| P_n  \|_{\M} \| h \|_{\M} + \| Q_n (h) \|^2_{\M} - \| h \|^2_{\M}
	\\& \leq \varepsilon \| P_n  \|^2 + 2\sqrt{\varepsilon} \| P_n \|_{\M} \| h \|_{\M}.
\end{align*}
Consequently, recalling also Lemma~\ref{lem:projection}~(iii), we obtain that 
\[
\sup_{h \in \mathbb{H}_R} \E\Big[ \big\|\sqrt{\varepsilon} P_n + Q_n (h) \big\|^2_{\M} - \| h \|^2_{\M} \Big] \leq \varepsilon \E\big[ \| P_n \|^2_{\M} \big] + 4 \sqrt{\varepsilon} \Big( \E \big[ \| P_n \|^2_{\M} \big] + R \Big) \to 0
\] 
as \(\varepsilon \downarrow 0\).
As \(\delta> 0\) was arbitrary, we conclude that 
\begin{align*} 
	\limsup_{\varepsilon \downarrow 0} V^\varepsilon \leq \uv + \sup_{h \in \mathbb{H}_R} \E\big[ w_\varphi \big(e^L \|h - Q_n (h) \|_\infty \big)\big].
\end{align*} 
It remains to explain that the last term can be made arbitrarily small when \(n\) is taken large enough. This is the program for the remainder of this proof. 
Notice that 
\[
\sup_{h \in \mathbb{H}_R} \P ( \| h \|_{\M} > K ) \leq \frac{R}{K^2}
\] 
by Chebyshev's inequality. 
Thus, using that \(w_\varphi\) is bounded, we get that 
\begin{align*}
	\sup_{h \in \mathbb{H}_R} \E\big[ w_\varphi \big(e^L \|h - Q_n (h) \|_\infty \big)\big] & \leq \sup_{h \in \mathbb{H}_R} \E\big[ w_\varphi \big(e^L \|h - Q_n (h) \|_\infty \big) \1_{\{ \| h \|_{\M} \leq  K \}} \big] + \textup{const } \frac{R}{K^2}
	\\&\leq w_{\varphi} \Bigg( e^L \sup_{ \substack{ u \, \in \, \M \\ \| u \|_{\M} \leq K }} \| Q_n u - u \|_\infty \Bigg) + \textup{const } \frac{R}{K^2}.
\end{align*}
Letting first \(n \to \infty\) and then \(K \to \infty\), yields that 
\[
\sup_{h \in \mathbb{H}_R} \E\big[ w_\varphi \big(e^L \|h - Q_n (h) \|_\infty \big)\big] \to 0 \quad\text{as } n \to \infty,
\]
where we used Lemma~\ref{SA: partition}~(ii). 
This proves the last inequality in \eqref{eq: main}. \qed

%\bibliographystyle{siam}
%\bibliography{references}

\begin{thebibliography}{10}

\bibitem{BVLT_20}
{\sc J.~Backhoff-Veraguas, D.~Lacker, and L.~Tangpi}, {\em Nonexponential
  {Sanov} and {Schilder} theorems on {Wiener} space: {BSDEs}, {Schr{\"o}dinger}
  problems and control}, Ann. Appl. Probab., 30 (2020), pp.~1321--1367.

\bibitem{bogachev}
{\sc V.~I. Bogachev}, {\em Measure Theory. {Vol}. {I} and {II}}, Berlin:
  Springer, 2007.

\bibitem{BD_98}
{\sc M.~Bou{\'e} and P.~Dupuis}, {\em A variational representation for certain
  functionals of {Brownian} motion}, Ann. Probab., 26 (1998), pp.~1641--1659.

\bibitem{BD_01}
\leavevmode\vrule height 2pt depth -1.6pt width 23pt, {\em Risk-sensitive and
  robust escape control for degenerate diffusion processes}, Math. Control
  Signals Syst., 14 (2001), pp.~62--85.

\bibitem{Budhiraja2024}
{\sc A.~Budhiraja}, {\em On {Some} {Extensions} of the {Bou{\'e}}-{Dupuis}
  {Variational} {Formula}}.
\newblock Preprint, {arXiv}:2403.01562 [math.{PR}], 2024.

\bibitem{BudhirajaDupuis2000}
{\sc A.~Budhiraja and P.~Dupuis}, {\em A variational representation for
  positive functionals of infinite dimensional {Brownian} motion}, Probab.
  Math. Stat., 20 (2000), pp.~39--61.

\bibitem{BudhirajaDupuis2019}
\leavevmode\vrule height 2pt depth -1.6pt width 23pt, {\em Analysis and
  approximation of rare events. {Representations} and weak convergence
  methods}, vol.~94 of Probab. Theory Stoch. Model., New York, NY: Springer,
  2019.

\bibitem{Budhiraja2025}
{\sc A.~Budhiraja and X.~Song}, {\em Large Deviation Principles for Functionals
  of Fractional Brownian Motions}, Springer Nature Singapore, Singapore, 2025,
  pp.~101--131.

\bibitem{conway}
{\sc J.~B. Conway}, {\em A {C}ourse in {F}unctional {A}nalysis}, vol.~96 of
  Grad. Texts Math., Springer, New York, 2nd~ed., 1990.

\bibitem{CK_25}
{\sc D.~Criens and M.~Kupper}, {\em Representation {Theorems} for {Convex}
  {Expectations} and {Semigroups} on {Path} {Space}}.
\newblock Preprint, {arXiv}:2503.10572 [math.{OC}], 2025.

\bibitem{DU_99}
{\sc L.~Decreusefond and A.~S. {\"U}st{\"u}nel}, {\em Stochastic analysis of
  the fractional {Brownian} motion}, Potential Anal., 10 (1999), pp.~177--214.

\bibitem{DZ_98}
{\sc A.~Dembo and O.~Zeitouni}, {\em Large deviations techniques and
  applications.}, vol.~38 of Appl. Math. (N. Y.), New York, NY: Springer,
  2nd~ed., 1998.

\bibitem{DeuschelStroock1989}
{\sc J.-D. Deuschel and D.~W. Stroock}, {\em Large deviations.}, vol.~137 of
  Pure Appl. Math., Academic Press, Boston, MA etc.: Academic Press, Inc.,
  rev.~ed., 1989.

\bibitem{dupuisellis}
{\sc P.~Dupuis and R.~S. Ellis}, {\em A Weak Convergence Approach to the Theory
  of Large Deviations}, Wiley Ser. Probab. Stat., Chichester: John Wiley \&
  Sons, 1997.

\bibitem{KyFan1953}
{\sc K.~Fan}, {\em Minimax theorems}, Proc. Natl. Acad. Sci. USA, 39 (1953),
  pp.~42--47.

\bibitem{FleSon_06}
{\sc W.~H. Fleming and H.~M. Soner}, {\em Controlled {Markov} processes and
  viscosity solutions}, vol.~25 of Stoch. Model. Appl. Probab., New York, NY:
  Springer, 2nd~ed., 2006.

\bibitem{FS2016}
{\sc H.~F{\"o}llmer and A.~Schied}, {\em Stochastic Finance. {An} Introduction
  in Discrete Time.}, De Gruyter Textb., Berlin: de Gruyter, 4th~ed., 2016.

\bibitem{FreidlinWentzell2012}
{\sc M.~I. Freidlin and A.~D. Wentzell}, {\em Random perturbations of dynamical
  systems}, vol.~260 of Grundlehren Math. Wiss., Berlin: Springer, 3rd~ed.,
  2012.

\bibitem{FrizVictoir2010}
{\sc P.~K. Friz and N.~B. Victoir}, {\em Multidimensional stochastic processes
  as rough paths. {Theory} and applications.}, vol.~120 of Camb. Stud. Adv.
  Math., Cambridge: Cambridge University Press, 2010.

\bibitem{jacod79}
{\sc J.~Jacod}, {\em Calcul Stochastique et Probl{\`e}mes de Martingales},
  vol.~714 of Lect. Notes Math., Springer, Cham, 1979.

\bibitem{JP_22}
{\sc A.~Jacquier and A.~Pannier}, {\em Large and moderate deviations for
  stochastic {Volterra} systems}, Stochastic Processes Appl., 149 (2022),
  pp.~142--187.

\bibitem{J_92}
{\sc M.~R. James}, {\em Asymptotic analysis of nonlinear stochastic
  risk-sensitive control and differential games}, Math. Control Signals Syst.,
  5 (1992), pp.~401--417.

\bibitem{kallenberg}
{\sc O.~Kallenberg}, {\em Foundations of Modern Probability. {In} 2 volumes},
  vol.~99 of Probab. Theory Stoch. Model., Cham: Springer, 3rd~ed., 2021.

\bibitem{kato_84}
{\sc T.~Kato}, {\em Perturbation theory for linear operators}, vol.~132 of
  Grundlehren Math. Wiss., Springer, Cham, 2nd~ed., 1995.

\bibitem{zbMATH06864073}
{\sc R.~C. Kraaij}, {\em Large deviations of the trajectory of empirical
  distributions of {Feller} processes on locally compact spaces}, Ann. Probab.,
  46 (2018), pp.~775--828.

\bibitem{zbMATH07370698}
{\sc M.~Kupper and J.~M. Zapata}, {\em Large deviations built on
  max-stability}, Bernoulli, 27 (2021), pp.~1001--1027.

\bibitem{MRTZ_16}
{\sc J.~Ma, Z.~Ren, N.~Touzi, and J.~Zhang}, {\em Large deviations for
  non-{Markovian} diffusions and a path-dependent eikonal equation}, Ann. Inst.
  Henri Poincar{\'e}, Probab. Stat., 52 (2016), pp.~1196--1216.

\bibitem{Mishura2008}
{\sc Y.~Mishura}, {\em Stochastic calculus for fractional {Brownian} motion and
  related processes.}, vol.~1929 of Lect. Notes Math., Berlin: Springer, 2008.

\bibitem{zbMATH08149100}
{\sc M.~Nendel and A.~Sgarabottolo}, {\em A parametric approach to the
  estimation of convex risk functionals based on {Wasserstein} distance}, Appl.
  Math. Optim., 93 (2026), p.~44.
\newblock Id/No 8.

\bibitem{nua_06}
{\sc D.~Nualart}, {\em Stochastic calculus with respect to fractional
  {Brownian} motion}, Ann. Fac. Sci. Toulouse, Math. (6), 15 (2006),
  pp.~63--78.

\bibitem{Nualart2006}
\leavevmode\vrule height 2pt depth -1.6pt width 23pt, {\em {T}he {Malliavin}
  {C}alculus and {R}elated {T}opics.}, Probab. Appl., Berlin: Springer,
  2nd~ed., 2006.

\bibitem{NualartRascanu2002}
{\sc D.~Nualart and A.~R{\u{a}}{\c{s}}canu}, {\em Differential equations driven
  by fractional {Brownian} motion}, Collect. Math., 53 (2002), pp.~55--81.

\bibitem{NR_00}
{\sc D.~Nualart and C.~Rovira}, {\em Large deviations for stochastic {Volterra}
  equations}, Bernoulli, 6 (2000), pp.~339--355.

\bibitem{NVH}
{\sc M.~Nutz and R.~van Handel}, {\em Constructing sublinear expectations on
  path space}, Stochastic Processes Appl., 123 (2013), pp.~3100--3121.

\bibitem{peng_et_all_PPDE_survey}
{\sc S.~Peng, Y.~Song, and F.~Wang}, {\em Survey on path-dependent {PDEs}},
  Chin. Ann. Math., Ser. B, 44 (2023), pp.~837--856.

\bibitem{PTZ_20}
{\sc D.~Possama{\"{\i}}, N.~Touzi, and J.~Zhang}, {\em Zero-sum path-dependent
  stochastic differential games in weak formulation}, Ann. Appl. Probab., 30
  (2020), pp.~1415--1457.

\bibitem{zbMATH05851950}
{\sc D.~W. Stroock}, {\em Probability theory.\ {An} analytic view}, Cambridge:
  Cambridge University Press, 2nd~ed., 2011.

\bibitem{stroock_APV_V3}
\leavevmode\vrule height 2pt depth -1.6pt width 23pt, {\em Probability theory.\
  {An} analytic view}, Cambridge: Cambridge University Press, 3rd~ed., 2024.

\bibitem{TZ_26}
{\sc S.~Tang and J.~Zhou}, {\em Comparison principle of second order
  path-dependent partial differential equations and application to
  path-dependent stochastic differential games}, SIAM J. Control Optim., 64
  (2026), pp.~2757--2783.

\bibitem{Varadhan1984}
{\sc S.~R.~S. Varadhan}, {\em Large deviations and applications}, vol.~46 of
  CBMS-NSF Reg. Conf. Ser. Appl. Math., Philadelphia, PA: Society for
  Industrial {and} Applied Mathematics (SIAM), 1984.

\bibitem{Z_09}
{\sc X.~Zhang}, {\em A variational representation for random functionals on
  abstract {Wiener} spaces}, J. Math. Kyoto Univ., 49 (2009), pp.~475--490.

\end{thebibliography}

\end{document}